\newtheorem{theorem}{Theorem}[section]
\newtheorem{lemma}[theorem]{Lemma}
\newtheorem{corollary}[theorem]{Corollary}
\newtheorem{obs}[theorem]{Observation}
\theoremstyle{definition}
\newtheorem{definition}[theorem]{Definition}
\newtheorem{example}[theorem]{Example}
\theoremstyle{remark}
\newtheorem{remark}[theorem]{Remark}
\numberwithin{equation}{section}
\DeclareMathOperator{\rank}{rank}
\DeclareMathOperator{\n}{N}
\DeclareMathOperator{\match}{match}
\DeclareMathOperator{\MR}{\rm{MR}^{--}}
\DeclareMathOperator{\jac}{Jac}
\DeclareMathOperator{\im}{Im}
\DeclareMathOperator{\all1}{\bf 1}
\DeclareMathOperator{\at}{\;{\rule[-3.6mm]{.1mm}{8mm}}}
\newcommand{\floor}[1]{\Big\lfloor #1 \Big\rfloor}
\renewcommand{\i}{\mathrm{i}}
\newcommand{\seq}[3]{#1_{1} #3 #1_{2} #3 \dots #3 #1_{#2}} 
\newcommand{\set}[1]{\{#1\}}
\renewcommand{\d}{\text{d}}
\begin{document}
\title{Spectral characterization of matchings in graphs}
\author{Keivan Hassani Monfared \thanks{University of Calgary, k1monfared@gmail.com} \footnote{The work of this author was partially supported by the Natural Sciences and Engineering Research Council of Canada.}
\and Sudipta Mallik \thanks {Department of Mathematics \& Statistics, Northern Arizona University, 805 S. Osborne Dr. PO Box: 5717, Flagstaff, AZ 86011, USA, sudipta.mallik@nau.edu}
}
\date{}
\maketitle   

\renewcommand{\thefootnote}{\fnsymbol{footnote}} 
\footnotetext{\emph{2010 Mathematics Subject Classification. 05C50,65F18\\ Keywords: Skew-Symmetric Matrix, Graph, Tree, Matching, The Jacobian Method, Spectrum, Structured Inverse Eigenvalue Problem.}}    
\renewcommand{\thefootnote}{\arabic{footnote}} 
\begin{abstract}
A spectral characterization of the matching number (the size of a maximum matching) of a graph is given. More precisely, it is shown that the graphs $G$ of order $n$ whose matching number is $k$ are precisely those graphs with the maximum skew rank $2k$ such that for any given set of $k$ distinct nonzero purely imaginary numbers there is a real skew-symmetric matrix $A$ with graph $G$ whose spectrum consists of the given $k$ numbers, their conjugate pairs and $n-2k$ zeros.
\end{abstract}

\section{Introduction}
A {\it matching} in a graph $G$ is a set of vertex-disjoint edges. A {\it maximum matching} in $G$ is a matching with the maximum number of edges among all matchings in $G$.  A {\it perfect matching} in a graph $G$ on $n$ vertices is a maximum matching consisting of $\frac{n}{2}$ edges.  Matchings are well-studied combinatorial objects with practical applications such as Hall's marriage theorem (1935). For a full treatment of matchings see \cite{LP}. In 1947 Tutte gave necessary and sufficient conditions for a graph to have a perfect matching. 

\begin{theorem}\cite{tutte}\label{tutte}
A graph $G$ has a perfect matching if and only if for each vertex subset $S$ of $G$, the number of odd components of $G-S$ is at most $|S|$.
\end{theorem}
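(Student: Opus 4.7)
The plan is to prove each direction separately. For the necessity direction, given a perfect matching $M$ of $G$ and any subset $S \subseteq V(G)$, I would observe that each odd component $C$ of $G-S$ has odd order, so $M$ cannot match all of $C$ internally; at least one vertex of $C$ must therefore be matched by $M$ to a vertex of $S$. Distinct odd components contribute distinct matched vertices of $S$, which yields the desired bound.

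For the sufficiency direction, I would argue by contradiction. Suppose $G$ satisfies Tutte's condition but has no perfect matching; taking $S = \emptyset$ already shows that $n$ must be even. I would then pass to a maximal edge-supergraph $G^*$ on the same vertex set that still lacks a perfect matching, so that adding any nonedge to $G^*$ creates one. Since inserting edges can only merge components, $G^*$ also satisfies Tutte's condition.

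Let $U$ be the set of vertices of degree $n-1$ in $G^*$. The heart of the argument is the structural claim that every component of $G^* - U$ is a clique. If not, I can pick $a, b, c \in V(G^*) \setminus U$ with $ab, bc \in E(G^*)$ and $ac \notin E(G^*)$, and a fourth vertex $d$ with $bd \notin E(G^*)$ (which forces $d \notin U$, since any vertex of $U$ is adjacent to everything). By maximality, $G^* + ac$ admits a perfect matching $M_1$ containing $ac$, and $G^* + bd$ admits a perfect matching $M_2$ containing $bd$. The symmetric difference $M_1 \triangle M_2$ decomposes into alternating even cycles, and I would split into cases based on whether $ac$ and $bd$ lie in the same cycle. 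In either case I would splice together arcs of $M_1$ and $M_2$, using $ab$ or $bc$ as a bridge, to obtain a perfect matching of $G^*$ itself --- a contradiction. This alternating-path surgery, and the bookkeeping for which arc to reverse in each case, is the subtle point and the main obstacle of the proof.

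Once $G^* - U$ is known to be a disjoint union of complete graphs, the conclusion is a counting argument. Let $k$ denote the number of odd components of $G^* - U$. Tutte's condition gives $k \le |U|$, and the parity relation $n \equiv |U| + k \pmod{2}$ together with $n$ being even forces $|U| - k$ to be even. I would then construct a perfect matching of $G^*$ explicitly: match one vertex from each odd component to a distinct vertex of $U$ (possible because $U$-vertices are universal), complete each of the resulting even-order cliques internally, and pair the remaining $|U| - k$ vertices of $U$ among themselves. This contradicts the assumed nonexistence of a perfect matching in $G^*$ and completes the proof.
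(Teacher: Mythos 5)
The paper does not prove this statement: it is quoted as Tutte's 1947 theorem with a citation, and none of the machinery developed later in the paper (NEB trees, skew rank, the Jacobian method) is used to derive it. So the only comparison available is to the standard literature. Your proposal is exactly the classical Lov\'asz-style proof: necessity by counting the $M$-edges that each odd component of $G-S$ must send into $S$; sufficiency by passing to an edge-maximal counterexample $G^*$, letting $U$ be the universal vertices, proving that every component of $G^*-U$ is a clique via the symmetric difference of two perfect matchings of $G^*+ac$ and $G^*+bd$, and finishing with the parity/counting construction. All of those pieces are set up correctly, including the observation that adding edges preserves Tutte's condition and the parity argument $n\equiv |U|+k \pmod 2$.

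The one place where you have described a step rather than carried it out is the case in which $ac$ and $bd$ lie on the same alternating cycle $C$ of $M_1\triangle M_2$; you correctly flag this as the crux, but a proof needs the bookkeeping. It does go through: deleting $ac$ and $bd$ from $C$ leaves two paths, one joining $d$ to one of $\{a,c\}$ and one joining the other of $\{a,c\}$ to $b$; after relabeling $a\leftrightarrow c$ you may assume the paths are $Q_1$ from $d$ to $c$ and $Q_2$ from $a$ to $b$. Then $Q_1 - c$ begins and ends with $M_1$-edges, so its $M_1$-edges perfectly match $V(Q_1)\setminus\{c\}$, while $Q_2-b$ begins and ends with $M_2$-edges, so its $M_2$-edges perfectly match $V(Q_2)\setminus\{b\}$; adding the edge $bc$ (or $ab$ in the symmetric labeling) and the restriction of $M_1$ to $V(G^*)\setminus V(C)$ yields a perfect matching of $G^*$ avoiding both $ac$ and $bd$, the desired contradiction. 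With that paragraph supplied, your argument is a complete and correct proof; without it, the hard direction is only a (well-aimed) sketch.
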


The matching number, denoted by $\match(G)$, of a graph $G$ is the number of edges in a maximum matching in $G$. So Theorem \ref{tutte} characterizes all graphs $G$ on $n$ vertices with $\match(G)=\frac{n}{2}$. In this article we give another set of necessary and sufficient conditions for a graph $G$ to have a perfect matching. These conditions concern eigenvalues of skew-symmetric matrices corresponding to $G$. For a given positive integer $k$, we also give necessary and sufficient conditions for a graph $G$ to have $\match(G)=k$.

We begin by introducing some required terminology as given in \cite{HmM}. Let $A = [a_{ij}]$ be an $n\times n$ real skew-symmetric matrix. The {\it order} of $A$ is  $n$, and we denote it by $|A|$. The {\it graph of $A$}, denoted by $G(A)$, has the vertex set $\{1,2,\ldots,n\}$ and the edge set $\{ \{i,j\}: a_{ij}\neq 0, 1\leq i<j\leq n\}$. The set $S^{-}(G)$ denotes the set of all real skew-symmetric matrices whose graph is $G$. The {\it maximum skew rank} of $G$, denoted by $\MR (G)$, is  defined to be $\max\{\rank(A):A\in S^{-}(G)\}$. The maximum skew rank and the matching number of a graph are related as follows.

\begin{theorem}\cite[Theorem $2.5$]{skewrank}\label{maxskewrank}
$\MR (G)=2\match(G)$ for all graphs $G$.
\end{theorem}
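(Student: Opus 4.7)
The plan is to prove the two inequalities $\MR(G) \le 2\match(G)$ and $\MR(G) \ge 2\match(G)$ separately, using in both directions the Pfaffian identity for skew-symmetric matrices. Recall two classical facts I would invoke as black boxes: first, the rank of a real skew-symmetric matrix $A$ equals the largest order of a nonzero principal minor of $A$, and in particular $\rank(A)$ is even; second, for a skew-symmetric matrix $B$ of even order $2m$ we have $\det(B) = \mathrm{Pf}(B)^2$, where $\mathrm{Pf}(B) = \sum_{M} \sign(M)\prod_{\{i,j\}\in M} b_{ij}$ and the sum ranges over all perfect matchings $M$ of the complete graph on the index set of $B$.

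For the upper bound, I would take any $A \in S^{-}(G)$ of rank $r$. By the first fact, there is a principal submatrix $A[S]$ of order $r$ with $\det(A[S])\ne 0$, and $r$ is even, say $r=2m$. Then $\mathrm{Pf}(A[S])\ne 0$, so at least one monomial in the Pfaffian sum must be nonzero; the corresponding perfect matching of the complete graph on $S$ uses only edges $\{i,j\}$ with $a_{ij}\ne 0$, i.e.\ only edges of $G$. Thus $G[S]$ has a perfect matching of size $m$, forcing $m\le \match(G)$ and hence $\rank(A)=2m\le 2\match(G)$. Taking the maximum over $A\in S^{-}(G)$ yields $\MR(G)\le 2\match(G)$.

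For the lower bound, let $k=\match(G)$ and fix a maximum matching $M=\{e_1,\ldots,e_k\}$ in $G$, whose $2k$ endpoints form a set $T\subseteq V(G)$. I would introduce an indeterminate $x_e$ for each edge $e$ of $G$ and view the generic matrix $A(\mathbf{x})\in S^{-}(G)$ as a matrix with entries in the polynomial ring $\mathbb{R}[x_e : e\in E(G)]$. The Pfaffian $\mathrm{Pf}\bigl(A(\mathbf{x})[T]\bigr)$ is then a polynomial in the $x_e$; expanding the Pfaffian sum, the term corresponding to the perfect matching $M$ of $G[T]$ contributes a nonzero monomial $\pm\prod_{i=1}^{k} x_{e_i}$, and no other term in the sum can cancel it because the variables indexing distinct matchings are distinct. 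Hence $\mathrm{Pf}\bigl(A(\mathbf{x})[T]\bigr)$ is a nonzero polynomial, so the Zariski-open set of real specializations on which it is nonzero is nonempty. Any such specialization gives an $A\in S^{-}(G)$ with $\det(A[T])\ne 0$, so $\rank(A)\ge |T|=2k$, proving $\MR(G)\ge 2k$.

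The main obstacle is the lower-bound construction: one must ensure that the Pfaffian of the principal submatrix on the matched vertices does not collapse due to cancellation with other perfect matchings of $G[T]$. Passing to indeterminate entries (or equivalently invoking algebraic independence) resolves this, because the monomial arising from $M$ has a unique multi-set of variables and therefore survives regardless of what other perfect matchings $G[T]$ may carry. The upper-bound direction is essentially a structural observation about nonzero monomials in the Pfaffian and carries no real difficulty.
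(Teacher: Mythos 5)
The statement you are proving is one the paper does not prove at all: it is imported verbatim as Theorem 2.5 of the cited reference on minimum skew rank, so there is no in-paper argument to compare against. Judged on its own, your proof is correct and is essentially the standard argument for this fact. The upper bound via ``a nonzero Pfaffian of a principal submatrix forces a perfect matching on that vertex set'' is sound, using the (true, and skew-symmetry-specific) fact that the rank of a skew-symmetric matrix is attained on a principal submatrix and is even. The lower bound via a generic specialization of $\mathrm{Pf}\bigl(A(\mathbf{x})[T]\bigr)$, with the no-cancellation observation that distinct perfect matchings of $G[T]$ contribute distinct monomials, is also sound. The one point you state too loosely: a specialization making the Pfaffian nonzero need not lie in $S^{-}(G)$, since some variable $x_e$ (for an edge $e$ outside $T$, or even inside) could be sent to $0$, which changes the graph of the matrix. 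You need to choose the specialization in the nonempty Zariski-open set where $\mathrm{Pf}\bigl(A(\mathbf{x})[T]\bigr)\cdot\prod_{e\in E(G)}x_e$ is nonzero; since this product is a nonzero polynomial over $\mathbb{R}$, such a real point exists. With that one-line repair the argument is complete, and it is arguably more self-contained than the paper, which simply defers to the literature.
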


The rank of a real symmetric or skew-symmetric matrix can be determined by its nonzero eigenvalues as follows.

\begin{lemma}\cite[Corollary 2.5.14]{HJ}\label{nonzeroeigen}
Let $A$ be a real symmetric or skew-symmetric matrix. Then $\rank(A)$ equals to the number of nonzero eigenvalues of $A$.
\end{lemma}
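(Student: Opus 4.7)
The plan is to reduce both cases (real symmetric and real skew-symmetric) to a single observation: such matrices are normal, so they are unitarily diagonalizable over $\mathbb{C}$, and the rank of a diagonalizable matrix is visibly the number of nonzero eigenvalues counted with multiplicity.

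First I would verify normality: if $A$ is real symmetric then $A^{*}=A^{T}=A$, and if $A$ is real skew-symmetric then $A^{*}=A^{T}=-A$; in either case $AA^{*}=A^{*}A$. Then by the spectral theorem for normal matrices there exists a unitary $U\in M_{n}(\mathbb{C})$ and a diagonal $D=\diag(\lambda_{1},\dots,\lambda_{n})$, whose diagonal entries are exactly the eigenvalues of $A$ listed with algebraic multiplicity, such that $A=UDU^{*}$.

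Next I would invoke the standard fact that multiplication by invertible matrices preserves rank, giving $\rank(A)=\rank(UDU^{*})=\rank(D)$. Since $D$ is diagonal, $\rank(D)$ equals the number of nonzero diagonal entries, which by construction is the number of nonzero eigenvalues of $A$ (with multiplicity).

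There is no serious obstacle here; the only delicate point is that in the skew-symmetric case the eigenvalues are purely imaginary and the diagonalization takes place over $\mathbb{C}$ rather than $\mathbb{R}$, so one must be careful to state the result for eigenvalues in $\mathbb{C}$ and note that $U$ need not be real orthogonal. This does not affect the argument because rank is invariant under the change of scalar field from $\mathbb{R}$ to $\mathbb{C}$.
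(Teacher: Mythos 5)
Your proof is correct: the paper gives no proof of this lemma, citing it directly from Horn and Johnson, and your argument via normality, unitary diagonalization over $\mathbb{C}$, and rank invariance under multiplication by invertible matrices is exactly the standard reasoning behind that cited result. The remark that rank is unaffected by passing from $\mathbb{R}$ to $\mathbb{C}$ correctly handles the only subtlety in the skew-symmetric case.
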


A {\it full matching} in a graph $G$ on $n$ vertices is a matching $M$ such that $2|M|=n$ or $n-1$, i.e., $\match(G)=\lfloor \frac{n}{2}\rfloor$.  In Section 2 we determine existence of a full matching of $G$ using nonzero eigenvalues of matrices in  $S^{-}(G)$. In Section 3, for a given positive integer $k$, we give necessary and sufficient conditions for $G$, in terms of nonzero eigenvalues of matrices in  $S^{-}(G)$, to have $\match(G)=k$.

To study matchings in connected graphs we first study matchings in trees. A certain kind of trees called NEB trees is introduced in \cite{HmM} and it has been shown that any NEB tree has a full matching. We introduce required definitions and notation for NEB trees as given in \cite{HmM}.

\textbf{Notation:} Let $T$ be a tree, and let $T(v)$ denote the forest obtained from $T$ by deleting vertex $v$. Also, let $T' = T_w(v)$ denote the connected component of $T(v)$ that contains the neighbor $w$ of $v$. $T'$ is a tree, hence it makes sense to consider $T'(w) = \left(T_w(v)\right)(w)$, the forest obtained from $T'$ by deleting vertex $w$, and $T'' = \left(T_w(v)\right)_u(w)$, the connected component of $T'(w)$ that contains the neighbor $u$ of $w$, and so on. For simplicity, we denote the tree $(\cdots(((T_{v_2}(v_1))_{v_3}(v_2))_{v_4}(v_3))\cdots)_{v_k}(v_{k-1})$ by $T_{v_k}(v_1,v_2,\ldots,v_{k-1})$, and the forest $((\cdots(((T_{v_2}(v_1))_{v_3}(v_2))_{v_4}(v_3))\cdots)_{v_k}(v_{k-1}))(v_k)$ by $T(v_1,\ldots,v_k)$. See Figure \ref{FigGenTree}. 

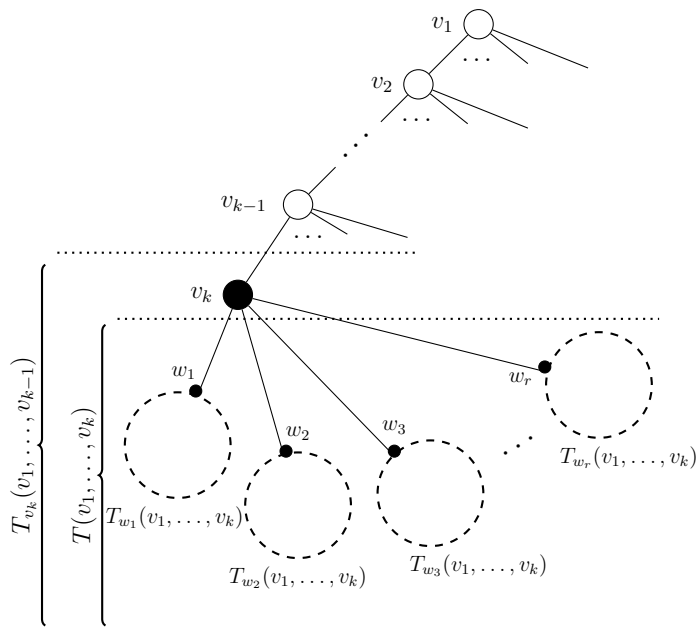
\begin{figure}[h]
\begin{center}
\begin{tikzpicture}[scale=.8]
	\node[draw, circle] (00) at (0,0) {};
	\node[,scale=.8] () at (-.6,0) {$v_1$};
	\node[draw, circle] (11) at (-1,-1) {};
	\node[,scale=.8] () at (-1.6,-1) {$v_2$};
	\node[,scale=.8] (12) at (0,-.6) {$\cdots$};
	\node[] (13) at (1,-.8) {};
	\node[] (14) at (2,-.8) {};
	\node[rotate=43.5] (22) at (-2,-2) {$\cdots$};
	\node[,scale=.8] (23) at (-1,-1.6) {$\cdots$};
	\node[] (24) at (0,-1.8) {};
	\node[] (25) at (1,-1.8) {};
	\node[draw, circle] (33) at (-3,-3) {};
	\node[,scale=.8] () at (-3.9,-3) {$v_{k-1}$};

	\node[] (35) at (-1,-2.7) {};
	\node[] (36) at (0,-2.7) {};
	
	\node[draw, circle, fill] (44) at (-4,-4.5) {};
	\node[,scale=.8] () at (-4.6,-4.5) {$v_k$};
	\node[,scale=.8] (45) at (-2.8,-3.55) {$\cdots$};	
	\node[] (46) at (-2,-3.6) {};
	\node[] (47) at (-1,-3.6) {};
	
	\node[draw, thick, circle, dashed, minimum size = 40] (55) at (-5,-7) {};
	\draw[fill] (-4.7,-6.1) circle (.1);
	\node[scale=.7] () at (-4.9,-5.8) {$w_1$}; 
	\node[scale=.7] () at (-5.05,-8.2) {$T_{w_1}(v_1,\ldots,v_k)$};

	\node[draw, thick, circle, dashed, minimum size = 40] (56) at (-3,-8) {};
	\draw[fill] (-3.2,-7.1) circle (.1);
	\node[scale=.7] () at (-3,-6.8) {$w_2$};
	\node[scale=.7] () at (-3,-9.2) {$T_{w_2}(v_1,\ldots,v_k)$};

	\node[draw, thick, circle, dashed, minimum size = 40] (58) at (-.8,-7.8) {};
	\draw[fill] (-1.4,-7.1) circle (.1);
	\node[scale=.7] () at (-1.4,-6.7) {$w_3$};
	\node[scale=.7] () at (0,-9) {$T_{w_3}(v_1,\ldots,v_k)$};

	\node[rotate=30] (57) at (.7,-7) {$\cdots$};

	\node[draw, thick, circle, dashed, minimum size = 40] (59) at (2,-6) {};
	\draw[fill] (1.1,-5.7) circle (.1);
	\node[scale=.7] () at (.7,-5.9) {$w_r$};
	\node[scale=.7] () at (2.5,-7.2) {$T_{w_r}(v_1,\ldots,v_k)$};
	
	\draw[] (00) -- (11) -- (22) -- (33) -- (44) -- (55);
	\draw[] (00) -- (13);
	\draw[] (00) -- (14);
	\draw[] (11) -- (24);
	\draw[] (11) -- (25);
	\draw[] (33) -- (46);
	\draw[] (33) -- (47);
	
	\draw[] (44) -- (56);
	\draw[] (44) -- (59);
	\draw[] (44) -- (58);
	
	\draw[decorate,decoration={brace},thick] (-6.2,-10) to node[rotate=90,midway,above,scale=.8] (bracket) {$T(v_1,\ldots,v_k)$} (-6.2,-5);
	\draw[thick, dotted] (-6,-4.9) -- (3,-4.9);
	
	\draw[decorate,decoration={brace},thick] (-7.2,-10) to node[rotate=90,midway,above,scale=.8] (bracket) {$T_{v_k}(v_1,\ldots,v_{k-1})$} (-7.2,-4);
	\draw[thick, dotted] (-7,-3.8) -- (-1,-3.8);
\end{tikzpicture}
\caption{Tree $T$ with subgraphs $T(v_1,\ldots,v_k)$ and $T_{v_k}(v_1,\ldots,v_{k-1})$.}\label{FigGenTree}
\end{center}
\end{figure}

\begin{example}
Consider the graph in Figure \ref{FigConcreteTree}. Delete vertex $1$ and consider the connected component that contains the vertex $2$. This tree is denoted by $T_2(1)$. Then in this tree delete vertex $2$. The obtained forest is denoted by $T(1,2)$. The connected component of $T(1,2) $ that contains the vertex $3$ is denoted by $T_3(1,2)$.

\begin{figure}[h]
\begin{center}
\begin{tikzpicture}[scale=.8]
	\node[circle, fill, gray] (1) at (0,0) {};
	\node[circle, draw, black] () at (0,0) {};
	\node[,scale=.8] () at (-.6,0) {$1$};
	
	\node[circle, fill, gray] (2) at (-1,-1) {};
	\node[draw, black, circle] () at (-1,-1) {};
	\node[,scale=.8] () at (-1.6,-1) {$2$};
	
	\node[draw, circle] (6) at (1,-1) {};
	\node[,scale=.8] () at (1.4,-1) {$6$};
	
	\node[draw, circle] (4) at (-2,-2) {};
	\node[,scale=.8] () at (-2.6,-2) {$4$};
	
	\node[draw, circle] (3) at (0,-2.2) {};
	\node[,scale=.8] () at (0.6,-2.2) {$3$};
	
	\node[draw, circle] (5) at (1,-3.4) {};
	\node[,scale=.8] () at (1.6,-3.4) {$5$};
	
	\draw[] (4) -- (2) -- (1) -- (6);
	\draw[] (2) -- (3) -- (5);
	
	\draw[decorate,decoration={brace},thick] (-.8,-3.7) to node[rotate=90,midway,above,scale=.8] (bracket) {$T_3(1,2)$} (-.8,-1.8);
	\draw[thick, dotted] (-.7,-1.8) -- (1.7,-1.8);
	
	\draw[decorate,decoration={brace},thick] (-2.8,-3.7) to node[rotate=90,midway,above,scale=.8] (bracket) {$T(1,2)$} (-2.8,-1.6);
	\draw[thick, dotted] (-2.7,-1.6) -- (1.7,-1.6);
\end{tikzpicture}
\caption{Tree $T$ with subgraphs $T(1,2)$ and $T_{3}(1,2)$.}\label{FigConcreteTree}
\end{center}
\end{figure}
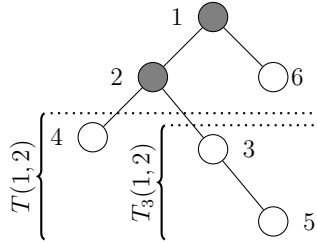
\end{example}

\begin{definition}\cite[Definition 2.3]{HmM}\label{NEBtree}
Let $T$ be a tree on $n$ vertices, and $w$ be a vertex of $T$. $T$ is defined to have \textit{nearly even branching property at $w$} (in short, $T$ is {\it NEB at $w$}) as follows. If $n=1$, $T$ is NEB at $w$.  If $n\geq 2$, $T$ is NEB at $w$ if the following conditions are satisfied:
\begin{itemize}
\item[(i)] $T(w)$ has exactly one odd component if $n$ is even, and $T(w)$ has no odd component if $n$ is odd; and
\item[(ii)] for each neighbor $v$ of $w$ in $T$, $T_v(w)$ is NEB at $v$. 
\end{itemize} 
\end{definition}

\begin{obs}\label{ObstTwoOddatV}
If a tree $T$ is not NEB with respect to a vertex $v$, then there is a vertex $w$ such that $T(w)$ has at least two odd components.
\end{obs}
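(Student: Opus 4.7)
\smallskip\noindent\textbf{Proof proposal.}
My plan is to work directly from the recursive Definition~\ref{NEBtree}: starting at $v$, I would follow the chain of failures of NEB into successively smaller subtrees, stop at the first vertex where condition (i) breaks, and then argue that $T$ itself already loses at least two odd components when that vertex is removed.

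The first fact I would record is a parity observation: if condition (i) of Definition~\ref{NEBtree} fails at a vertex $u$ of a tree $T'$ of order $m\geq 2$, then $T'(u)$ has at least two odd components. Indeed, $T'(u)$ has $m-1$ vertices, so its number of odd components has the same parity as $m-1$; when $m$ is even the NEB value is $1$ and a failure forces an odd count different from $1$, hence at least $3$, while when $m$ is odd the NEB value is $0$ and a failure forces a nonzero even count, hence at least $2$.

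Next I would build the descent. Set $u_0 := v$ and $T^{(0)} := T$; if condition (i) already fails at $u_0$ in $T^{(0)}$, then $w := v$ works by the parity observation. Otherwise condition (ii) fails at $u_0$, so some neighbor $u_1$ of $u_0$ has $T_{u_1}(v)$ not NEB at $u_1$; pass to $T^{(1)} := T_{u_1}(v)$ and iterate. Each step moves into a strictly smaller subtree, and a single-vertex subtree is automatically NEB, so after finitely many steps I reach a sequence $v = u_0, u_1, \dots, u_k$ with $T^{(k)} := T_{u_k}(v, u_1, \dots, u_{k-1})$ not NEB at $u_k$ and condition (i) finally failing there.

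To finish, I would take $w := u_k$ and lift back to $T$. Viewing $T$ as rooted at $v$, the subtree $T^{(k)}$ is precisely the subtree rooted at $u_k$, so the neighbors of $u_k$ in $T$ are its neighbors in $T^{(k)}$ together with the parent $u_{k-1}$ (when $k\geq 1$). Removing $u_k$ from $T$ therefore produces one component per child of $u_k$, namely the components of $T^{(k)}(u_k)$, plus a single ``outside'' component supported on $V(T)\setminus V(T^{(k)})$. Since the parity observation guarantees at least two odd components in $T^{(k)}(u_k)$, these persist as odd components of $T(u_k)$, as required. The step that deserves the most care is this last structural identification, which rests on the basic fact that removing a vertex from a tree yields exactly one component per neighbor, together with a check that the nested construction $T_{u_k}(v,u_1,\dots,u_{k-1})$ coincides with the subtree rooted at $u_k$ in the rooting of $T$ at $v$.
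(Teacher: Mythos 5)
Your proposal is correct and follows essentially the same route as the paper: descend through the recursive definition to the first vertex where condition (i) of Definition~\ref{NEBtree} fails, use the parity of the $m-1$ remaining vertices to conclude that at least two odd components appear there, and observe that these components persist unchanged when that vertex is deleted from $T$ itself. You simply make explicit two steps the paper leaves implicit, namely the parity count behind ``failure of (i) forces at least two odd components'' and the termination of the descent.
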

\begin{proof}
Let $v_1 = v$. If $T(v_1)$ has at least two odd components, then $w=v_1$. Otherwise there are vertices $v_2, \ldots, v_k$ such that $T(v_1,v_2,\ldots, v_k)$ has at least two odd connected components. Let $w=v_k$. Now $T(w)$ has one more branch (at $v_{k-1}$) than $T_{v_k}(v_1,v_2,\ldots, v_{k-1})$, thus it has at least two odd components.
\end{proof}

For a vertex $v$, let $\n(v)$ denote the set of all neighbors of $v$. Let $T$ be a tree which is not NEB at a vertex $v_1$. There exists $v_2, v_3, \ldots, v_k$ such that $T_{v_k}(v_1, v_2, \ldots, v_{k-1})$ is not NEB at $v_k$, but every $T_{w}(v_1, v_2, \ldots, v_k)$ is NEB at $w$ for all $w \in \n(v_k)\setminus \{ v_{k-1}\}$. We call such $T_{v_k}(v_1,\ldots, v_{k-1})$ a minimal non-NEB subtree (with respect to $v_1$). 

\begin{example}
Tree $T$ shown in Figure \ref{FigNontrivialNonNEB} is not NEB at vertex $1$ because $T_3(1,2)$ is not NEB with respect to vertex $3$. But $T_4(1,2,3)$ and $T_5(1,2,3)$ both are NEB with respect to $4$ and $5$, respectively. Hence, $T_3(1,2)$ is a minimal non-NEB subtree of $T$ with respect to vertex $1$.

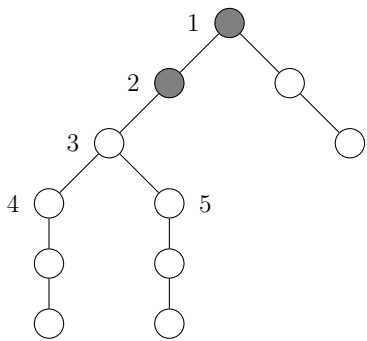
\begin{figure}[h]
\begin{center}
\begin{tikzpicture}[scale=.8]
	\node[circle, fill, gray] () at (0,0) {};
	\node[circle, draw, black] (1) at (0,0) {};
	\node[,scale=.8] () at (-.6,0) {$1$};
	
	\node[circle, fill, gray] () at (-1,-1) {};
	\node[draw, black, circle] (2) at (-1,-1) {};
	\node[,scale=.8] () at (-1.6,-1) {$2$};
	
	\node[draw, circle] (6) at (1,-1) {};
	\node[draw, circle] (7) at (2,-2) {};
	
	\node[draw, circle] (3) at (-2,-2) {};
	\node[,scale=.8] () at (-2.6,-2) {$3$};
	
	\node[draw, circle] (31) at (-3,-3) {};
	\node[,scale=.8] () at (-3.6,-3) {$4$};
	
	\node[draw, circle] (32) at (-1,-3) {};
	\node[,scale=.8] () at (-.4,-3) {$5$};
	
	\node[draw, circle] (41) at (-3,-4) {};
	\node[,scale=.8] () at (-3.6,-4) {};
	
	\node[draw, circle] (42) at (-1,-4) {};
	\node[,scale=.8] () at (-.4,-4) {};
	
	\node[draw, circle] (51) at (-3,-5) {};
	\node[,scale=.8] () at (-3.6,-5) {};
	
	\node[draw, circle] (52) at (-1,-5) {};
	\node[,scale=.8] () at (-.4,-5) {};
	
	\draw[] (51) -- (41) -- (31) -- (3) -- (32) -- (42) -- (52);
	\draw[] (3) -- (2) -- (1) -- (6) -- (7);

\end{tikzpicture}
\caption{Subtree $T_3(1,2)$ is a minimal non-NEB subtree of $T$ with respect to vertex $1$.}\label{FigNontrivialNonNEB}
\end{center}
\end{figure}
\end{example}

The following theorem gives the most important known result we use in this article. It shows that if a tree $T$ is NEB at a vertex, then $T$ has a full matching.

\begin{theorem}\cite[Corollary $5.3$]{HmM}\label{neb}
Let $G$ be  a connected graph on $n$ vertices and $\lambda_1$, $\lambda_2$, \ldots, $\lambda_n$ distinct real numbers such that \[\lambda_j=-\lambda_{n+1-j},\] for all  $j=1,\ldots,n$. If $G$ has a spanning tree which is NEB at a vertex, then $\match(G)=\lfloor \frac{n}{2}\rfloor$ and there exists a matrix $A\in S^-(G)$ with eigenvalues $\i\lambda_1, \ldots, \i\lambda_n$.
\end{theorem}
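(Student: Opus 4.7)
The plan is to split the theorem into two parts: the combinatorial claim $\match(G)=\lfloor n/2\rfloor$, and the analytic construction of the matrix $A\in S^{-}(G)$ with the prescribed spectrum. For the matching claim I would argue by induction on $n$ that every tree $T$ that is NEB at a vertex $w$ has a full matching. For $n=1$ there is nothing to show. For $n\geq 2$ even, Definition \ref{NEBtree} gives that $T(w)$ has exactly one odd component $C$; the neighbor $v$ of $w$ lying in $C$ satisfies $T_v(w)=C$ and is NEB at $v$, so I put the edge $\{w,v\}$ in the matching, note that $C-v$ is a disjoint union of NEB trees of even order to which induction applies, and do the same on the other (even) components of $T(w)$. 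If $n$ is odd, $T(w)$ has no odd components, so I leave $w$ unmatched and apply induction to every $T_v(w)$ with $v\in\n(w)$. Since $T$ is a spanning subtree of $G$, $\match(G)\geq\lfloor n/2\rfloor$, and the reverse inequality is trivial.

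For the existence of $A$, I plan a two-stage argument. In the first stage I construct, by induction along the NEB structure of the spanning tree $T$, a real skew-symmetric matrix $A_T$ with $G(A_T)\subseteq T$ and spectrum exactly $\i\lambda_1,\ldots,\i\lambda_n$. At vertex $w$, the inductive step applies the hypothesis to each branch $T_v(w)$ (which is NEB at $v$) with a carefully chosen sub-list of the $\lambda_j$'s, then glues the resulting block matrices together through the row/column of $w$. The weights $a_{wv}$ on the edges incident to $w$ are determined by expanding the characteristic polynomial along the $w$-row, so as to match the target polynomial $\prod_j(x^2+\lambda_j^2)$ (or $x\prod_j(x^2+\lambda_j^2)$ when $n$ is odd). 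The parity condition on the number of odd components of $T(w)$ in Definition \ref{NEBtree} is precisely what forces this system of polynomial equations in the unknowns $a_{wv}$ to be consistent and to admit real solutions.

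In the second stage I promote $A_T$ to a matrix whose graph is exactly $G$ by the Jacobian Method. Parameterize $S^{-}(G')$ for $G'\supseteq T$ by the free entries $a_{ij}$ with $\{i,j\}\in E(G')$, and consider the map $\Phi$ sending such a parameter vector to the coefficients of the characteristic polynomial (equivalently, the elementary symmetric functions of $-\lambda_j^2$). Evaluating the Jacobian of $\Phi$ at $A_T$, whose spectrum is simple by the distinctness of the $\lambda_j$'s, yields a submersion at $A_T$ when restricted to perturbations supported on tree edges; the standard eigenvalue-derivative formula $\partial\lambda/\partial a_{ij}$ combined with the fact that $T$ is a tree makes this Jacobian essentially triangular with nonzero diagonal. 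By the implicit function theorem I may then turn on each non-tree edge $\{i,j\}\in E(G)\setminus E(T)$ to any sufficiently small nonzero value while correcting the tree entries so that the characteristic polynomial is preserved; the resulting matrix lies in $S^{-}(G)$ and has the required spectrum.

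The principal obstacle I expect is the first stage: one has to verify that the inductive construction of $A_T$ actually terminates with all edge weights real and nonzero, and the only combinatorial leverage available is the odd/even component count at each internal vertex. Once $A_T$ exists with simple purely imaginary spectrum, the second stage is largely mechanical, since the distinctness hypothesis on $\lambda_1,\ldots,\lambda_n$ guarantees smoothness of the eigenvalues and the tree structure of $T$ guarantees that the relevant Jacobian is nonsingular.
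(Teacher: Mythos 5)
First, a point of context: the paper does not prove this statement at all --- it is imported verbatim from \cite[Corollary 5.3]{HmM}, so there is no internal proof to compare against. Your combinatorial half (the induction on $n$ showing that a tree NEB at $w$ has a full matching, hence $\match(G)\geq\lfloor n/2\rfloor$ via the spanning tree) is correct; it is essentially Observation 3.8 of \cite{HmM}, which the present paper invokes later in Theorem \ref{NEBiffFullMatching}. Your two-stage architecture for the spectral half --- build $A_T$ supported on the spanning tree by induction along the NEB structure, then switch on the non-tree edges by an implicit-function-theorem perturbation --- is also the architecture of the actual proof in \cite{HmM}.

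However, stage one as you describe it fails at a concrete step: you propose to realize each branch $T_v(w)$ ``with a carefully chosen sub-list of the $\lambda_j$'s.'' If a branch spectrum shares a nonzero value $\i\lambda_i$ with the target spectrum, then evaluating the expansion of the characteristic polynomial along the $w$-row at $x=\i\lambda_i$ annihilates every term except the one carrying $a_{wv}^2$, which forces $a_{wv}=0$ and destroys the graph. Already for $P_3$ NEB at an endpoint with target spectrum $\{\i\lambda,0,-\i\lambda\}$, the branch $P_2$ must receive eigenvalues $\pm\i\mu$ with $0<\mu<\lambda$ \emph{strictly}, since the connecting weight satisfies $a^2=\lambda^2-\mu^2$; taking $\mu=\lambda$ gives $a=0$. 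The branch data must strictly interlace the target data rather than be a subset of it --- this is precisely why \cite{HmM} is a theorem about \emph{interlaced} spectral data, and its induction carries the spectra of both $A$ and $A(w)$ as hypotheses. Relatedly, your assertions that the NEB parity condition ``forces the system to be consistent and to admit real solutions'' and that the stage-two Jacobian is ``essentially triangular with nonzero diagonal'' are not afterthoughts: they are the load-bearing content of the theorem, established in \cite{HmM} by propagating nonsingularity of a Jacobian through the induction. So the skeleton is right, but the key analytic steps are either missing or, in the sub-list choice, wrong as stated.
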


\section{Characterizations of NEB trees and connected graphs with a perfect matching}
Theorem \ref{neb} shows that if a tree $T$ is NEB at a vertex, then $T$ has a full matching. It is natural to ask if the converse is true. In the next theorem we show that the converse is indeed true.

\begin{theorem}\label{NEBiffFullMatching}
Let $T$ be a tree on $n$ vertices. Tree $T$ is NEB with respect to some vertex $v$ if and only if $\match(T)=\lfloor {\frac{n}{2}} \rfloor$.
\end{theorem}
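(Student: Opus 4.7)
The forward direction is exactly Theorem \ref{neb} specialized to $G=T$. For the converse, the plan is to prove the following strengthened statement by induction on $n$, because the NEB condition at $w$ demands that each branch $T_v(w)$ be NEB \emph{at the specific neighbor $v$} and a single-vertex conclusion leaves no leverage to pass this down the recursion: (A) if $n$ is odd and $\match(T)=(n-1)/2$, then $T$ is NEB at every vertex $w$ left unsaturated by some maximum matching of $T$; and (B) if $n$ is even and $\match(T)=n/2$, then $T$ is NEB at every vertex of $T$.

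The base cases $n=1,2$ are immediate. The main engine is a matching-restriction observation: fix a maximum matching $M$ of $T$ and a vertex $w$, and let $m(w)$ denote the $M$-partner of $w$ when it exists. For each neighbor $v$ of $w$, the edges of $M$ lying inside the branch $T_v(w)$ form a matching on $T_v(w)$ which is perfect if $v \neq m(w)$, near-perfect and unsaturating $m(w)$ if $v = m(w)$, and perfect in every branch if $w$ itself is $M$-unsaturated. Each branch thus inherits the hypothesis $\match=\lfloor |V|/2\rfloor$, with parity of $|V|$ precisely matching whichever of (A) or (B) we need to invoke.

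For the inductive step of (A), let $w$ be unsaturated by a maximum matching. Every branch $T_v(w)$ then carries a perfect matching and hence has even order, so $T(w)$ has no odd component and NEB condition (i) holds; case (B) of the inductive hypothesis applied to each branch yields that $T_v(w)$ is NEB at $v$, giving (ii). For the inductive step of (B), fix any vertex $w$. The unique odd component of $T(w)$ is $T_{m(w)}(w)$, since it inherits a near-perfect matching missing $m(w)$ while every other branch inherits a perfect matching; this gives (i). For (ii), the branch $T_{m(w)}(w)$ is NEB at $m(w)$ by case (A) applied to its inherited near-perfect matching, and each branch $T_v(w)$ with $v\neq m(w)$ is NEB at $v$ by case (B).

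The obstacle I anticipate is recognizing that the theorem as literally stated is too weak to induct on: without strengthening to \emph{NEB at every vertex} in the even case, there is no mechanism to certify NEB at the particular vertex $v$ that the outer NEB condition at $w$ demands. Once (A) and (B) are coupled in the strengthened form, the parity bookkeeping is forced by the matching-restriction observation and the two cases feed into each other cleanly.
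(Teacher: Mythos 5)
Your proof is correct, but it takes a genuinely different route from the paper's. The paper proves the backward direction by contraposition: assuming $T$ is NEB at no vertex, it invokes Observation \ref{ObstTwoOddatV} to produce a vertex whose deletion leaves at least two odd components (passing through a minimal non-NEB subtree when $n$ is odd), and then argues that too many vertices must be left unsaturated, so $\match(T)<\lfloor \frac{n}{2}\rfloor$. You instead prove the implication directly by a strengthened induction that couples the two parity cases: a perfectly matched tree is NEB at \emph{every} vertex, and a near-perfectly matched tree is NEB at every vertex missed by some maximum matching. Your engine is the observation that a maximum matching of a tree restricts to a maximum matching of each branch $T_v(w)$, the unique odd branch being the one containing the partner of $w$ (and no odd branch if $w$ is unsaturated); this hands each branch exactly the hypothesis needed to certify NEB at the specific neighbor $v$, which is precisely where the unstrengthened statement would be too weak to induct on. Your argument is self-contained, avoids both Observation \ref{ObstTwoOddatV} and the minimality argument in the paper's Case 2(b), and yields strictly more information (the exact set of vertices at which $T$ is NEB); the paper's contrapositive argument is shorter on the page but leans on that minimality step. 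One minor remark: for the forward direction you cite Theorem \ref{neb} with $G=T$, whereas the paper cites the purely combinatorial Observation 3.8 of the reference; both are legitimate, though the latter avoids dragging in the spectral machinery.
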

\begin{proof}
The forward direction is proved in \cite[Observation $3.8$]{HmM}. For the backward direction, assume $T$ is not NEB with respect to any vertex. By Observation \ref{ObstTwoOddatV} there is a vertex $v$ of $T$ such that $T(v)$ has at least two odd components. Let $T_{w_1}(v)$ and $T_{w_2}(v)$ be two such odd components.

There are two cases: \begin{enumerate}
	\item[Case 1:] $n$ is even. Thus, $\lfloor {\frac{n}{2}} \rfloor = {\frac{n}{2}}$, that is, $T$ has a perfect matching, and exactly one of the neighbors of $v$ is matched with $v$. That is, at least one of the $w_1$ or $w_2$ are not matched with $v$. Without loss of generality, assume that $w_1$ is the vertex which is not matched (See Figure \ref{FigEvenNotMatched}). Then $T_{w_1}(v)$ is a tree with odd number of vertices, hence it has a vertex which is not matched. Furthermore, since $T$ has an even number of vertices, it has at least 2 vertices which are not matched. That contradicts the assumption that $T$ has a perfect matching.
	
\begin{figure}[h]
\begin{center}
\begin{tikzpicture}[scale=.7]
	\node[] (35) at (-1,-2.7) {};
	\node[] (36) at (0,-2.7) {};
	
	\node[draw, circle, fill] (44) at (-4,-4.5) {};
	\node[,scale=.8] () at (-4.6,-4.5) {$v$};
	\node[] (46) at (-2,-3.6) {};
	\node[] (47) at (-1,-3.6) {};
	
	\node[draw, thick, circle, dashed, minimum size = 40] (55) at (-5,-7) {};
	\draw[fill] (-4.7,-6.1) circle (.1);
	\draw[fill] (-5.5,-6.6) circle (.1);
	\node[scale=.5] () at (-4.9,-5.8) {$w_1$}; 
	\node[scale=.5] () at (-5,-8.3) {$T_{w_1}(v)$};

	\node[draw, thick, circle, dashed, minimum size = 40] (56) at (-3,-8) {};
	\draw[fill] (-3.3,-7.1) circle (.1);
	\node[scale=.5] () at (-3,-6.8) {$w_2$};
	\node[scale=.5] () at (-2.8,-9.2) {$T_{w_2}(v)$};

	\node[draw, thick, circle, dashed, minimum size = 40] (58) at (-.8,-7.8) {};
	\draw[fill] (-1.4,-7.1) circle (.1);
	\node[scale=.5] () at (-1.4,-6.7) {$w_3$};
	\node[scale=.5] () at (0,-9) {$T_{w_3}(v)$};

	\node[rotate=30] (57) at (.7,-7) {$\cdots$};

	\node[draw, thick, circle, dashed, minimum size = 40] (59) at (2,-6) {};
	\draw[fill] (1.1,-5.7) circle (.1);
	\node[scale=.5] () at (.7,-5.8) {$w_r$};
	\node[scale=.5] () at (2.5,-7.2) {$T_{w_r}(v)$};
	
	\draw[] (44) -- (55);

	\draw[ultra thick] (44) -- (56);
	\draw[] (44) -- (59);
	\draw[] (44) -- (58);
	
	\draw[very thick] (-4.7,-6.1) -- (-5.2,-6.6);
	\begin{scope}[shift={(.4,-.4)}]	\draw[very thick] (-4.7,-6.1) -- (-5.2,-6.6); \end{scope}
	\begin{scope}[shift={(-.2,-1.1)}]	\draw[very thick] (-4.7,-6.1) -- (-5.2,-6.6); \end{scope}
	\begin{scope}[shift={(.4,-1)}]	\draw[very thick] (-4.7,-6.1) -- (-5.2,-6.6); \end{scope}
	\begin{scope}[shift={(-.2,-.6)}]	\draw[very thick] (-4.7,-6.1) -- (-5.2,-6.6); \end{scope}

	\begin{scope}[shift={(2,-1.1)}]
	\draw[very thick] (-4.7,-6.1) -- (-5.2,-6.6);
	\begin{scope}[shift={(.4,-.4)}]	\draw[very thick] (-4.7,-6.1) -- (-5.2,-6.6); \end{scope}
	\begin{scope}[shift={(-.2,-1.1)}]	\draw[very thick] (-4.7,-6.1) -- (-5.2,-6.6); \end{scope}
	\begin{scope}[shift={(.4,-1)}]	\draw[very thick] (-4.7,-6.1) -- (-5.2,-6.6); \end{scope}
	\begin{scope}[shift={(-.2,-.6)}]	\draw[very thick] (-4.7,-6.1) -- (-5.2,-6.6); \end{scope}
	\end{scope}
	
	\begin{scope}[]
	\draw[very thick] (-1.4,-7.1) -- (-1.4,-7.8);
	\begin{scope}[shift={(.4,-.4)}]	\draw[very thick] (-1.4,-7.1) -- (-1.4,-7.8); \end{scope}
	\begin{scope}[shift={(.8,0)}]	\draw[very thick] (-1.4,-7.1) -- (-1.4,-7.8); \end{scope}
	\begin{scope}[shift={(1.2,-.4)}]	\draw[very thick] (-1.4,-7.1) -- (-1.4,-7.8); \end{scope}
	\end{scope}

	\begin{scope}[rotate = 15, shift={(1,1.3)}]
	\draw[very thick] (-1.4,-7.1) -- (-1.4,-7.8);
	\begin{scope}[shift={(.4,-.4)}]	\draw[very thick] (-1.4,-7.1) -- (-1.4,-7.8); \end{scope}
	\begin{scope}[shift={(.8,0)}]	\draw[very thick] (-1.4,-7.1) -- (-1.4,-7.8); \end{scope}
	\begin{scope}[shift={(1.2,-.4)}]	\draw[very thick] (-1.4,-7.1) -- (-1.4,-7.8); \end{scope}
	\end{scope}
\end{tikzpicture}
\caption{Vertex $w_1$ is not matched with vertex $v$ where $T_{w_1}(v)$ is an odd component.}\label{FigEvenNotMatched}
\end{center}
\end{figure}
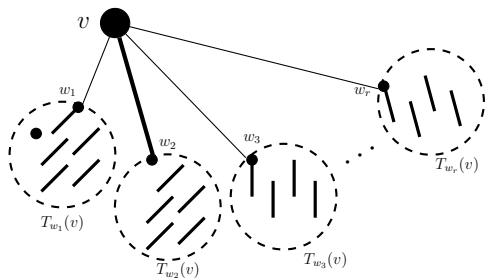

	\item[Case 2:] $n$ is odd. Fix $v_1$ and find a minimal non-NEB subtree of $T$ (with respect to $v_1$), say $T_{v_k}(v_1,\ldots,v_{k-1}) = T'$. Let $v=v_k$. Since $T'$ is a minimal non-NEB subtree of $T$, $T'(v)$ has at least two odd components.
	\begin{itemize}
		\item[(a)] $T'(v)$ has at least 3 odd components, then similar to Case 1, $v$ is matched with at most one of its neighbors in an odd component, and other two odd components each have at least one vertex which is not matched. Hence $\match(T) < \lfloor \frac{n}{2} \rfloor$. 
		\item[(b)] $T'(v)$ has exactly two odd components, say $T'_{w_1}(v)$ and $T'_{w_2}(v)$. Now, consider $T_{v}(w_1)$ (See Figure \ref{FigOddNotMatched}), which has even number of vertices. If $T_{v}(w_1)$ is NEB at $v$, then $T$ is NEB at $w_1$ by minimality of $T'$.  Otherwise, $T_{v}(w_1)$ has at least two vertices which are not matched, by Case 1. Furthermore, since $T$ has odd number of vertices, then it has at least 3 vertices which are not matched. Thus $\match(T) < \lfloor \frac{n}{2} \rfloor$.
\end{itemize}

\end{enumerate}

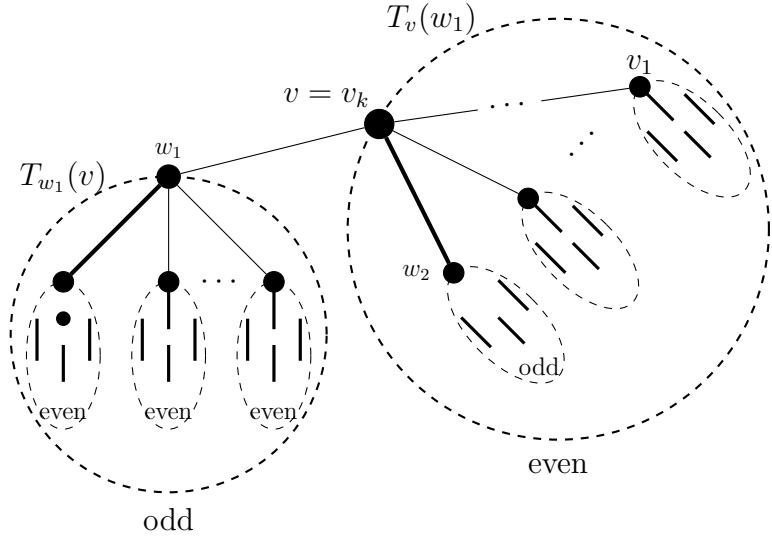
\begin{figure}[h]
\begin{center}
\begin{tikzpicture}[scale=.7]
	\node[draw, circle, fill] (v) at (0,0) {};
	\node[] () at (-1,.5) {$v=v_k$};
	\node[draw, circle, fill, scale=.8] (w1) at (-4,-1) {};
	\node[scale=.8] () at (-4,.-.5) {$w_1$};
	
	\draw[dashed, thick] (3.4,-2) circle (4);
	\node[scale=1] () at (3.4,-6.5) {even}; 
	\draw[dashed, thick] (-4,-4) circle (3);
	\node[scale=1] () at (-4,-7.5) {odd}; 
	
	\draw[] (v) -- (w1);
	
	\node[draw, circle, fill, scale=.7] (w11) at (-6,-3) {};
	\node[draw, circle, fill, scale=.7] (w12) at (-4,-3) {};
	\node[scale =1] () at (-3,-3) {$\cdots$};
	\node[draw, circle, fill, scale=.7] (w13) at (-2,-3) {};
	
	\draw[ultra thick] (w11) -- (w1);
	\draw[] (w12) -- (w1);
	\draw[] (w13) -- (w1);
	
	\node[circle, fill, scale=.5] () at (-6,-3.7) {};
	\draw[very thick] (-6.5,-3.7) -- (-6.5,-4.5);	
	\draw[very thick] (-6,-4.2) -- (-6,-4.9);
	\draw[very thick] (-5.5,-3.7) -- (-5.5,-4.5);

	\begin{scope}[shift={(2,0)}]
	\draw[very thick] (-6.5,-3.7) -- (-6.5,-4.5);	
	\draw[very thick] (-6,-4.2) -- (-6,-4.9);
	\draw[very thick] (-6,-3) -- (-6,-3.9);
	\draw[very thick] (-5.5,-3.7) -- (-5.5,-4.5);
	\end{scope}
	
	\begin{scope}[shift={(4,0)}]
	\draw[very thick] (-6.5,-3.7) -- (-6.5,-4.5);	
	\draw[very thick] (-6,-4.2) -- (-6,-4.9);
	\draw[very thick] (-6,-3) -- (-6,-3.9);
	\draw[very thick] (-5.5,-3.7) -- (-5.5,-4.5);
	\end{scope}
	
	\draw[dashed] (-6,-4.4) ellipse (.7 and 1.4);
	\node[scale=.8] () at (-6,-5.5) {even}; 
	\draw[dashed] (-4,-4.4) ellipse (.7 and 1.4);
	\node[scale=.8] () at (-4,-5.5) {even}; 
	\draw[dashed] (-2,-4.4) ellipse (.7 and 1.4);
	\node[scale=.8] () at (-2,-5.5) {even}; 
	
	\begin{scope}[rotate=45, shift={(5,0)}]
	\node[draw, circle, fill, scale=.7] (w21) at (-6,-3) {};
	\node[draw, circle, fill, scale=.7] (w22) at (-4,-3) {};
	\node[rotate = 45] () at (-2.5,-3) {$\cdots$};
	\node[draw, circle, fill, scale=.7] (w23) at (-1,-3) {};
	
	\node[rotate=6] (v2) at (-3,-1.5) {$\cdots$};
	\node[] () at (-.7,-2.7) {$v_1$};
	\node[scale=.8] () at (-6.5,-2.5) {$w_2$};
	\node[scale=.8] () at (-6.1,-5.4) {odd};
	\draw[ultra thick] (w21) -- (v);
	\draw[] (w22) -- (v);
	\draw[] (w23) -- (v2) -- (v);
	
	\draw[dashed] (-6,-4.4) ellipse (.7 and 1.4);
	\draw[dashed] (-4,-4.4) ellipse (.7 and 1.4);
	\draw[dashed] (-1,-4.4) ellipse (.7 and 1.4);

	\draw[very thick] (-6.5,-3.7) -- (-6.5,-4.5);	
	\draw[very thick] (-6,-4.2) -- (-6,-4.9);
	\draw[very thick] (-5.5,-3.7) -- (-5.5,-4.5);

	\begin{scope}[shift={(2,0)}]
	\draw[very thick] (-6.5,-3.7) -- (-6.5,-4.5);	
	\draw[very thick] (-6,-4.2) -- (-6,-4.9);
	\draw[very thick] (-6,-3) -- (-6,-3.9);
	\draw[very thick] (-5.5,-3.7) -- (-5.5,-4.5);
	\end{scope}
	
	\begin{scope}[shift={(5,0)}]
	\draw[very thick] (-6.5,-3.7) -- (-6.5,-4.5);	
	\draw[very thick] (-6,-4.2) -- (-6,-4.9);
	\draw[very thick] (-6,-3) -- (-6,-3.9);
	\draw[very thick] (-5.5,-3.7) -- (-5.5,-4.5);
	\end{scope}
	\end{scope}
	
	\node[] () at (1,2) {$T_{v}(w_1)$};
	\node[] () at (-6,-1) {$T_{w_1}(v)$};
\end{tikzpicture}
\caption{Tree $T$ and subtrees $T_{v}(w_1)$ and $T_{w_1}(v)$.}\label{FigOddNotMatched}
\end{center}
\end{figure}
\end{proof}

We get the following corollary from Theorem \ref{neb} and Theorem \ref{NEBiffFullMatching}.

\begin{corollary}
Let $T$ be a tree on $n$ vertices. Then $\match(T)= \lfloor \frac{n}{2} \rfloor$ if and only if there is a real skew-symmetric matrix $A$ with distinct eigenvalues whose graph is $T$.
\end{corollary}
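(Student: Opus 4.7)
The plan is to obtain each direction essentially for free from the theorems immediately preceding the corollary.

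For the forward direction, suppose $\match(T)=\lfloor n/2\rfloor$. Theorem \ref{NEBiffFullMatching} then says $T$ is NEB at some vertex. The strategy is to plug $T$ into Theorem \ref{neb}, which requires a list of distinct real numbers $\lambda_1,\dots,\lambda_n$ with $\lambda_j=-\lambda_{n+1-j}$. Such a list clearly exists: choose any $\lfloor n/2\rfloor$ distinct positive reals, together with their negatives (and a single $0$ if $n$ is odd, which is forced by the antisymmetry of the indexing). The resulting purely imaginary numbers $\i\lambda_1,\dots,\i\lambda_n$ are then $n$ distinct eigenvalues of a matrix $A\in S^-(T)$ produced by Theorem \ref{neb}, so $T$ is the graph of a real skew-symmetric matrix with distinct eigenvalues.

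For the backward direction, suppose $A\in S^{-}(T)$ has $n$ distinct eigenvalues. Since $A$ is real skew-symmetric, its eigenvalues are purely imaginary and come in conjugate pairs, with $0$ being its own conjugate. Distinctness forces $0$ to occur at most once among the eigenvalues, so the nonzero eigenvalues pair up perfectly. A parity count then gives that $0$ appears exactly once when $n$ is odd and not at all when $n$ is even; in either case the number of nonzero eigenvalues is $2\lfloor n/2\rfloor$. Applying Lemma \ref{nonzeroeigen} yields $\rank(A)=2\lfloor n/2\rfloor$.

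To finish, invoke Theorem \ref{maxskewrank}: since $A\in S^{-}(T)$,
\[
2\match(T)\;=\;\MR(T)\;\geq\;\rank(A)\;=\;2\lfloor n/2\rfloor,
\]
and the trivial bound $\match(T)\leq\lfloor n/2\rfloor$ yields equality. The argument has no real obstacle; the only point worth a moment's care is the parity argument forcing $\rank(A)=2\lfloor n/2\rfloor$, which relies crucially on the distinctness assumption to bound the multiplicity of the eigenvalue $0$.
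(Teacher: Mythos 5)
Your proposal is correct and matches the paper's route: the paper derives this corollary directly from Theorem \ref{neb} and Theorem \ref{NEBiffFullMatching} for the forward direction, and the backward direction you give is exactly the rank/parity argument the paper uses in the analogous Theorem \ref{fullmatching} (via Lemma \ref{nonzeroeigen} and Theorem \ref{maxskewrank}). Nothing is missing.
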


Below we mention a rather easy exercise in graph theory, and we will use it to extend the above result to connected graphs.

\begin{lemma}\label{GFulliffTfull}
Let $G$ be a connected graph on $n$ vertices. Then $\match(G)=\lfloor \frac{n}{2} \rfloor$ if and only if $G$ has a spanning tree $T$ with $\match(T)=\lfloor \frac{n}{2} \rfloor$. More specifically, for any matching $M$ (of any size) of $G$, there is a spanning tree $T$ of $G$ which includes all the edges of $M$.
\end{lemma}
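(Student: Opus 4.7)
The plan is to prove the stronger ``more specifically'' assertion directly, since it subsumes the backward direction of the equivalence; the forward direction is essentially free. For the forward direction, suppose $T$ is a spanning tree of $G$ with $\match(T)=\lfloor n/2\rfloor$. Since $E(T)\subseteq E(G)$, any matching of $T$ is a matching of $G$, so $\match(G)\geq \lfloor n/2\rfloor$. The opposite inequality holds trivially for any graph on $n$ vertices, so $\match(G)=\lfloor n/2\rfloor$.

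For the main claim, let $M$ be any matching in $G$. The key observation is that $M$, viewed as a subgraph, is a disjoint union of single edges and hence a forest (in particular, acyclic). I would then extend $M$ to a spanning tree by the standard greedy procedure: starting from the edge set $M$, repeatedly adjoin any edge of $E(G)$ whose addition does not create a cycle, until no such edge remains. The resulting subgraph $T$ is a maximal acyclic subgraph of $G$ containing $M$. To see that $T$ spans $G$, suppose for contradiction that $T$ has two distinct connected components $C_1$ and $C_2$. Since $G$ is connected, there is an edge $e\in E(G)$ with one endpoint in $C_1$ and the other in $C_2$; adjoining $e$ to $T$ cannot close a cycle (the endpoints lie in different components), contradicting maximality. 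Hence $T$ is a connected, acyclic, spanning subgraph of $G$, i.e., a spanning tree of $G$ containing every edge of $M$.

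The backward direction of the equivalence now follows by applying this construction to a \emph{maximum} matching $M$ of $G$, which has size $\lfloor n/2\rfloor$: the spanning tree $T$ so obtained satisfies $\match(T)\geq |M|=\lfloor n/2\rfloor$, and the reverse inequality is automatic since $T$ has $n$ vertices.

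There is essentially no substantive obstacle in this argument; it is a routine invocation of the matroid-theoretic fact that any forest in a connected graph extends to a spanning tree, which is presumably why the authors call it ``a rather easy exercise''. The only care needed is to record the trivial but crucial observation that a matching is already a forest, so the greedy extension can begin from $M$ itself.
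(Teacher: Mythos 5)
Your proof is correct, but it runs in the opposite direction from the paper's. The paper tears the graph down: it observes that every cycle of $G$ must contain an edge outside the matching $M$ (two edges of a cycle that share a vertex cannot both lie in a matching), deletes such an edge, and iterates until the graph is acyclic; connectivity and the edges of $M$ are preserved at every step, so the result is a spanning tree containing $M$. You instead build up: you note that $M$ itself is a forest and greedily adjoin edges of $G$ that close no cycle until the forest is maximal, then argue maximality forces connectivity. The two arguments are dual and equally elementary; the paper's version gets connectivity of the final tree for free (it never destroys it), at the cost of needing the small observation that cycles always contain deletable edges, while yours gets acyclicity for free and pushes the work into the connectivity step. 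One small imprecision in your write-up: for two \emph{arbitrary} components $C_1,C_2$ of a non-spanning maximal forest there need not be an edge of $G$ directly between them; the correct statement is that connectivity of $G$ yields an edge between \emph{some} pair of distinct components (take a path of $G$ joining two components and pick the edge where it first leaves the component of its initial vertex), which is all the contradiction requires. With that phrasing adjusted, the argument is complete.
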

\begin{proof}
If a spanning tree of $G$ has a full matching, then $G$ has a full matching. Fix a matching $M$ of $G$. Every cycle of $G$ contains an edge which is not in $M$. Delete one such edge from $G$, and repeat this process with the obtained graph which is still connected, it contains all edges of $M$, and it has at least one less cycle than $G$. The process stops with a connected acyclic graph (tree) on $n$ vertices, since $G$ has finitely many cycles. The obtained graph is a spanning tree of $G$ which contains all edges of $M$.
\end{proof}

\begin{theorem}\label{fullmatching}
Let $G$ be a connected graph on $n$ vertices. If $\match(G)= \lfloor \frac{n}{2} \rfloor$, then for any $n$ distinct real numbers $\lambda_1,\ldots,\lambda_n$ such that $\lambda_j=-\lambda_{n+1-j}$ for all  $j=1,\ldots,n$, there is a matrix $A\in S^{-}(G)$ with eigenvalues $\i\lambda_1,\ldots,\i\lambda_n$. Conversely if there is a matrix $A\in S^{-}(G)$ with distinct eigenvalues, then $\match(G)= \lfloor \frac{n}{2} \rfloor$.
\end{theorem}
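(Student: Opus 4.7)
For the forward direction, my plan is to chain together the three results already established. Starting from the hypothesis $\match(G)=\lfloor n/2 \rfloor$, Lemma \ref{GFulliffTfull} hands me a spanning tree $T$ of $G$ with $\match(T)=\lfloor n/2 \rfloor$; then Theorem \ref{NEBiffFullMatching} upgrades this to the statement that $T$ is NEB at some vertex $v$. Since $G$ now has a spanning tree which is NEB at a vertex, I can feed the given numbers $\lambda_1,\ldots,\lambda_n$ directly into Theorem \ref{neb} to obtain a matrix $A\in S^{-}(G)$ with spectrum $\i\lambda_1,\ldots,\i\lambda_n$. This direction is essentially a one-line assembly of existing lemmas.

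For the converse, suppose $A\in S^{-}(G)$ has $n$ distinct eigenvalues. Because $A$ is a real skew-symmetric matrix, its nonzero eigenvalues come in conjugate pairs $\pm\i\mu$, and any zero eigenvalue pairs with itself. Distinctness therefore forces at most one zero eigenvalue, so the number of nonzero eigenvalues is exactly $n$ if $n$ is even and exactly $n-1$ if $n$ is odd; in either case it equals $2\lfloor n/2 \rfloor$. By Lemma \ref{nonzeroeigen} this is precisely $\rank(A)$, so
\[
2\lfloor n/2 \rfloor = \rank(A) \le \MR(G) = 2\match(G),
\]
where the last equality is Theorem \ref{maxskewrank}. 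Thus $\match(G)\ge \lfloor n/2\rfloor$, and since $\match(G)\le \lfloor n/2\rfloor$ holds trivially, we conclude $\match(G)=\lfloor n/2 \rfloor$.

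The main subtlety, such as it is, is the eigenvalue-counting step in the converse: one must observe that the pairing structure of the spectrum of a real skew-symmetric matrix, combined with distinctness, leaves no room for more than a single zero eigenvalue. Everything else is a direct invocation of previously cited results. No new constructions or perturbation arguments are needed, since the hard work of realizing prescribed spectra on NEB spanning trees has already been done in Theorem \ref{neb}, and the structural equivalence between full matching and NEB property has just been established in Theorem \ref{NEBiffFullMatching}.
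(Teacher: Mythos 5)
Your proof is correct and follows essentially the same route as the paper: the forward direction chains Lemma \ref{GFulliffTfull}, Theorem \ref{NEBiffFullMatching}, and Theorem \ref{neb}, and the converse uses Lemma \ref{nonzeroeigen} together with Theorem \ref{maxskewrank} to get $2\lfloor n/2\rfloor = \rank(A) \le \MR(G) = 2\match(G)$. The only difference is that you explicitly justify the eigenvalue count (distinctness plus the conjugate-pair structure forces at most one zero eigenvalue), a step the paper leaves implicit.
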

\begin{proof}
Assume that $\match(G)= \lfloor \frac{n}{2} \rfloor$. By Lemma \ref{GFulliffTfull} graph $G$ has full matching if and only if it has a spanning tree $T$ with a full matching. Also by Theorem \ref{NEBiffFullMatching}, $T$ has a full matching if and only if $T$ is NEB with respect to a vertex. Thus, by Theorem \ref{neb}, $G$ realizes a real skew-symmetric matrix $A$ with the given eigenvalues. 

Conversely suppose that there is a real skew-symmetric matrix $A$ with distinct eigenvalues whose graph is $G$.   Then, by Lemma \ref{nonzeroeigen} and Theorem \ref{maxskewrank}, $$2 \floor{\frac{n}{2}} = \rank(A) \leq \MR(G) = 2 \match(G).$$ That is, $\floor{ \frac{n}{2}} \leq \match(G)$. Since $\match(G) \leq \floor{ \frac{n}{2}}$ for any graph $G$, we have $\match(G)=\floor{ \frac{n}{2}}$.
\end{proof}

Theorem \ref{fullmatching} immediately implies the following corollary giving a spectral condition for a connected graph to have a perfect matching or a near perfect matching.

\begin{corollary}
Let $G$ be a connected graph. Then $G$ has a full matching if and only if there is a matrix $A\in S^{-}(G)$ with distinct eigenvalues.
\end{corollary}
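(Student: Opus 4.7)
The plan is to observe that this corollary is essentially a repackaging of Theorem \ref{fullmatching}, with the only real work being to supply a concrete admissible choice of eigenvalues in the forward direction. Recall that, by definition, $G$ has a full matching precisely when $\match(G) = \lfloor n/2 \rfloor$, so both directions of the corollary translate directly into the hypothesis and conclusion of Theorem \ref{fullmatching}.

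For the forward direction, I would assume $G$ has a full matching, so $\match(G) = \lfloor n/2 \rfloor$, and then explicitly exhibit $n$ distinct reals $\lambda_1, \ldots, \lambda_n$ satisfying $\lambda_j = -\lambda_{n+1-j}$. For example, take $\lambda_j = j - \frac{n+1}{2}$; this gives an arithmetic progression that is antisymmetric about $0$, with all $\lambda_j$ distinct. When $n$ is odd, the middle entry is $0$; when $n$ is even, none of the $\lambda_j$ is zero. In either case the numbers $\i\lambda_1, \ldots, \i\lambda_n$ are pairwise distinct. Theorem \ref{fullmatching} then produces a matrix $A \in S^-(G)$ realizing exactly these $n$ distinct eigenvalues.

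For the reverse direction, the argument is immediate: if some $A \in S^-(G)$ has $n$ distinct eigenvalues, then the converse half of Theorem \ref{fullmatching} gives $\match(G) = \lfloor n/2 \rfloor$, i.e., $G$ has a full matching.

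There is no real obstacle here; the only mildly subtle point is keeping track of parity so that the symmetry constraint $\lambda_j = -\lambda_{n+1-j}$ is compatible with the distinctness requirement (when $n$ is odd this forces one of the $\lambda_j$ to be $0$, but this is already permitted by Theorem \ref{fullmatching}, which does not forbid zero eigenvalues in the list). Thus the corollary follows in a few lines once the explicit choice of $\lambda_j$ is in place.
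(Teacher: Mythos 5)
Your proposal is correct and follows the same route as the paper, which simply states that the corollary is an immediate consequence of Theorem \ref{fullmatching}; your explicit antisymmetric choice $\lambda_j = j - \tfrac{n+1}{2}$ just fills in the one detail the paper leaves implicit for the forward direction, and the reverse direction is, as you say, the converse half of that theorem verbatim.
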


\section{Spectral characterization of graphs with arbitrary matching number}
It is known that $\match(G)=k$ if and only if $\MR(G)=2k$, i.e., $G$ realizes a skew-symmetric with $2k$ nonzero eigenvalues by Theorem \ref{maxskewrank} and Lemma \ref{nonzeroeigen}. In this section we prove that that these eigenvalues can be any $k$ distinct nonzero purely imaginary numbers and their conjugate pairs. Similar to approaches in \cite{HmM, ms} we are going to use the Jacobian method, so we need to define an appropriate function and show its Jacobian is nonsingular when it is evaluated at some point.

Let $G$ be a graph on $n$ vertices with matching number $k$, and $k+m$ edges where $m>0$. Fix a maximum matching $\mathcal M$ of $G$ and without loss of generality assume $\mathcal M=\{\set{1,2}, \set{3,4}, \ldots, \set{2k-1,2k}\}$. Assume the $m$ edges of $G$ that are not in $\mathcal M$ are of the forms $e_l=\{i_l,j_l\}$, for $l=1,2,\ldots,m$. Let $x_1,\ldots,x_k,y_1,\ldots,y_m$ be $k+m$ independent indeterminates and set 
$$\bm x = (\seq{x}{k}{,}),\text{ and } \bm y = (\seq{y}{m}{,}).$$
We define a skew-symmetric matrix of variables where $x_j$ are in the positions corresponding to the edges in $\mathcal M$, and $y_l$ are in the positions of the edges not in $\mathcal M$.
Let $M=M(\bm x, \bm y)$ be an $n\times n$ skew-symmetric matrix whose $(2j-1,2j)$-entry is $x_j$, $(2j,2j-1)$-entry is $-x_j$, for $j=1,2,\ldots,k$, and for $l=1,2,\ldots,m$ let the $(i_l,j_l)$-entry of $M$ to be $y_l$ where $i_l<j_l$, and $-y_l$, otherwise. Note that Since $\match(G)=k$, $G-\{1,2,\ldots,2k\}$ has no edges.  Thus $M$ has the following form. 
$$M = \left[ \begin{array}{c|c}
N & L\\ \hline\vspace*{-12pt}\\ -L^T & O
\end{array} \right],$$ where $N$ is the upper left $2k\times 2k$ block of $M$, $O$ is the square zero matrix of size $n-2k$, and $L$ contains only $y_l$'s and zeros. Note that $N$ contains zero entries, all of the $x_j$'s, and some or none of $y_l$'s. In particular, the $(2j-1,2j)$-th entry of $N$ is $x_j$, for $j=1,2,\ldots,k$.

\begin{example} Consider the following graph $G$ on 6 vertices with 6 edges and $\match(G)=2$.  
\begin{center}
\begin{tikzpicture}
[colorstyle/.style={circle, draw=black!100,fill=black!100, thick, inner sep=0pt, minimum size=1.25 mm},>=stealth]

\node (1) at (0,0)[colorstyle, label=below:$1$]{};
\node (2) at (1,0)[colorstyle, label=below:$2$]{};
\node (4) at (1,1)[colorstyle, label=above:$4$]{};
\node (3) at (0,1)[colorstyle, label=above:$3$]{};
\node (5) at (2,0)[colorstyle, label=below:$5$]{};
\node (6) at (2,1)[colorstyle, label=above:$6$]{};

\draw [ultra thick] (1)--(2);
\draw [thick] (2)--(5);
\draw [thick] (2)--(3);
\draw [ultra thick] (3)--(4);
\draw [thick] (4)--(6);
\draw [thick] (5)--(4)--(2);

\node at (1,-1)[]{$G$};
\end{tikzpicture}
\end{center}
For the above $G$, $\mathcal M=\{\{1,2\},\{3,4\}\}$ is a maximum matching. So $M=M(\bm x, \bm y)$ would have the following form.
\[M=M(\bm x, \bm y)=\left[\begin{array}{cccc|cc}
0 & \bm{x_1} & 0 & 0 & 0 & 0\\
-\bm{x_1} & 0 & y_1 & y_2 & y_3 & 0\\
0 & -y_1 & 0 & \bm{x_2} & 0 & 0\\
0 & -y_2 & -\bm{x_2} & 0 & y_4 & y_5\\ \hline 
0 & -y_3 & 0 & -y_4 & 0 & 0\\
0 & 0 & 0 & -y_5 & 0 & 0
\end{array} \right].\]
\end{example}

A real evaluation $A$ of $M$ is obtained by assigning real values to indeterminates in $\bm x$ and $\bm y$. Clearly such evaluation $A$ is a skew-symmetric matrix whose graph is a subgraph of $G$ and the eigenvalue of $A$ are purely imaginary occurring in conjugate pairs and some zeros. Define the following ordering of the purely imaginary axis of the complex plane: for two numbers $a$ and $b$ on the imaginary axis of the complex plane let $a \geq b$ if $-a\i \geq -b\i$  and the equality holds if and only if $a=b$. 

Define $F: \mathbb{R}^{k+m} \to \mathbb{R}^n$ by \[ F(\bm x,\bm y) = \big(-\i \lambda_1 (M), -\i \lambda_2 (M), \ldots, -\i \lambda_n (M) \big), \] where $\lambda_j(M)$ is the $j$-th largest eigenvalue of $M$. Note that, some of the middle components of $F$ might be zero. Furthermore, since $\lambda_{j}(M)=-\lambda_{n-j+1}(M)$ for $j=1,\ldots,n$, $F$ is completely defined by half of its components, say the ones in upper half-plane and zeros. Moreover, $M$ has at most $k$ nonzero eigenvalues in the upper half-plane since $\MR(G)=2\match(G)=2k$. That is, $F$ is completely determined by its first $k$ components. 

Define $f: \mathbb{R}^{k+m} \to \mathbb{R}^k$ by \[ f(\bm x,\bm y) = \big(-\i \lambda_1 (M), -\i \lambda_2 (M), \ldots, -\i \lambda_k (M) \big). \]

Let $\lambda_1 > \lambda_2 > \ldots > \lambda_k>0$  be $k$ distinct nonzero purely imaginary numbers. Set $\bm a = (\seq{- \i \, \lambda}{k}{,})\in \mathbb R^k$, $\bm b = (0,\ldots, 0)\in \mathbb R^m$ and $A = M(\bm a, \bm b)$. 
Then $A$ is the block diagonal matrix 
\begin{equation} \label{adirectsummatching}
A = \bigoplus_{j=1}^{k} \left[ \begin{array}{cc}
0 & -\i \lambda_j\\ \i \lambda_j & 0
\end{array} \right] \oplus O_{n-2k}.
\end{equation}
That is, \[A  = \left[ \begin{array}{c|ccc}
 \begin{array}{cc|cc|ccc|cc}
0 & -\i \lambda_1 &0&0& &\cdots&&0 &0  \\
\i \lambda_1 & 0 &0&0& &\cdots&&0 &0 \\\hline
0&0&0& -\i\lambda_2& &\cdots&&0 &0 \\
0&0&\i\lambda_2&0 & &\cdots&&0 &0 \\\hline
 &&& &  && \ddots &&\\
  \vdots&\vdots&\vdots&\vdots &  &\ddots&  &\vdots&\vdots\\
 &&& & \ddots && & & \\\hline
 0&0&0&0 & & \hdots&& 0 & -\i \lambda_k\\
 0&0&0&0 & & \hdots&& \i \lambda_k & 0
\end{array} & & O& \\\hline
 & & & \\
O & & O & \\
 & & &  
\end{array} \right].\]
It easy to check that the nonzero eigenvalues of $A$ are $\pm\lambda_1,\pm \lambda_2,\ldots,\pm\lambda_k$ and consequently $f\at_A =f(\bm a, \bm b)= (\seq{- \i \, \lambda}{k}{,})$. We want to show that the Jacobian of $f$ evaluated at the point $(\bm a, \bm b)$ is nonsingular. It is known that the eigenvalues and eigenvectors of a matrix with distinct eigenvalues are continuous differentiable functions of the entries of the matrix \cite{eigenderivative}. The following lemma shows the derivative of the nonzero eigenvalues of a skew-symmetric matrix with $2k$ distinct nonzero eigenvalues and $n-2k$ zero eigenvalues with respect to the entries of the matrix, in terms of the entries of their corresponding eigenvectors.

\begin{lemma}\label{derivativeofeff}
Let $A$ be an $n\times n$ real skew-symmetric matrix with distinct nonzero eigenvalues $\seq{\lambda}{k}{,}$ in the upper half-plane, and corresponding unit eigenvectors $\seq{\bm v}{k}{,}$. Let $A(t)=A + t E_{rs} - t E_{sr}$, for $t\in (-\varepsilon , \varepsilon)$, where $\varepsilon$ is a small positive number. Also, let $\lambda_j(t)$ be the $j$-th largest eigenvalue of $A(t)$ with corresponding eigenvector $\bm v_j(t)$, and $\bm v_{j_r}$ denote the $r$-th entry of the vector $\bm v_j$. Then
\[ \frac{\d \lambda_j(t)}{\d t} \at_{\, t=0} = 2\i \im( \overline{\bm v_{j_r}} \bm v_{j_s} ),\]  where $\im(z)$ denotes the imaginary part of the complex number $z$.
\end{lemma}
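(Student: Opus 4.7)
The plan is to apply the standard first-order perturbation formula for simple eigenvalues, then exploit the fact that $A$ is real skew-symmetric (so its eigenvalues are purely imaginary) to eliminate the term involving $\bm v_j'(0)$. Because $A$ has $2k$ distinct nonzero eigenvalues together with $0$ of multiplicity $n-2k$, each $\lambda_j$ is simple, so by the differentiability result cited just before the lemma both $\lambda_j(t)$ and a suitably chosen unit eigenvector $\bm v_j(t)$ are smooth functions of $t$ in a neighborhood of $0$, and differentiation of the eigen-equation is legitimate.

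First I would differentiate $A(t)\bm v_j(t)=\lambda_j(t)\bm v_j(t)$ at $t=0$, obtaining
$$A'(0)\bm v_j + A\bm v_j'(0) = \lambda_j'(0)\bm v_j + \lambda_j \bm v_j'(0),$$
and then left-multiply by $\bm v_j^{*}$. Using $\bm v_j^{*}\bm v_j=1$ this gives
$$\bm v_j^{*} A'(0)\bm v_j + \bm v_j^{*} A \bm v_j'(0) = \lambda_j'(0) + \lambda_j \bm v_j^{*} \bm v_j'(0).$$
The next step is to rewrite $\bm v_j^{*}A$ using skew-symmetry: since $A$ is real with $A^{T}=-A$, we have $(\bm v_j^{*} A)^{T} = A^{T}\overline{\bm v_j} = -A\overline{\bm v_j} = -\overline{A\bm v_j} = -\overline{\lambda_j}\,\overline{\bm v_j}$, so $\bm v_j^{*} A = -\overline{\lambda_j}\,\bm v_j^{*}$. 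Substituting yields
$$\bm v_j^{*} A'(0)\bm v_j = \lambda_j'(0) + \bigl(\lambda_j + \overline{\lambda_j}\bigr)\,\bm v_j^{*}\bm v_j'(0).$$
Because $A$ is real skew-symmetric, each $\lambda_j$ is purely imaginary, so $\lambda_j+\overline{\lambda_j}=0$ and the awkward $\bm v_j^{*}\bm v_j'(0)$ term drops out, leaving $\lambda_j'(0)=\bm v_j^{*}A'(0)\bm v_j$.

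Finally, with $A'(0)=E_{rs}-E_{sr}$, a direct computation gives
$$\bm v_j^{*}(E_{rs}-E_{sr})\bm v_j = \overline{\bm v_{j_r}}\,\bm v_{j_s} - \overline{\bm v_{j_s}}\,\bm v_{j_r} = 2\i\,\im\!\left(\overline{\bm v_{j_r}}\,\bm v_{j_s}\right),$$
which is exactly the claimed formula. The only subtle point in the whole argument is keeping complex conjugates straight in the step $\bm v_j^{*}A=-\overline{\lambda_j}\,\bm v_j^{*}$; the rest is routine first-order perturbation theory, and no real obstacle arises because the simplicity of $\lambda_j$ already supplies all the regularity that is needed.
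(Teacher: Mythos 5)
Your proof is correct and follows essentially the same route as the paper: differentiate the eigen-equation, left-multiply by $\overline{\bm v_j}^T$, use skew-symmetry of the real matrix $A$ to turn $\overline{\bm v_j}^T A$ into a scalar multiple of $\overline{\bm v_j}^T$ so the $\dot{\bm v_j}(0)$ terms cancel, and then compute $\overline{\bm v_j}^T(E_{rs}-E_{sr})\bm v_j$ directly. The only cosmetic difference is that you write the multiplier as $-\overline{\lambda_j}$ and cancel via $\lambda_j+\overline{\lambda_j}=0$, whereas the paper substitutes $\overline{\lambda_j}=-\lambda_j$ from the outset; these are the same observation.
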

\begin{proof}
Note that $A(t)$, $\lambda_j(t)$ and $\bm v_j(t)$ are continuous functions of $t$, so $A(0) = A$, $\lambda_j (0)=\lambda_j, \bm v_j (0)= \bm v_j$, and when $t \to 0$ we have \[A(t) \to A,\] \[\lambda_j (t) \to \lambda_j,\] \[\bm v_j (t) \to \bm v_j.\] 
Furthermore,  \[\dot{A}(0) = E_{rs} - E_{sr},\] and \[A(t)\bm  v_j(t) = \lambda_j(t)\bm  v_j(t).\] 
Differentiating both sides with respect to $t$ we get \[\dot{A}(t) \bm v_j(t) + A(t) \dot{\bm v_j}(t) = \dot{\lambda_j}(t) \bm v_j(t) + \lambda_j(t) \dot{\bm v_j}(t).\] 
Set $t=0$, then \[(E_{rs} - E_{sr})\bm v_j + A \dot{\bm v_j}(0) = \dot{\lambda_j}(0) \bm v_j + \lambda_j \dot{\bm v_j}(0).\]
Multiplying both sides by $\overline{\bm v_j}^T$ from left we get \[ \overline{\bm v_j}^T (E_{rs} - E_{sr}) \bm v_j +  \overline{\bm v_j}^T A \dot{\bm v_j}(0) = \dot{\lambda_j}(0) \overline{\bm v_j}^T \bm v_j + \lambda_j \overline{\bm v_j}^T \dot{\bm v_j}(0). \]
Since $A$ is skew-symmetric $A \overline{\bm v_j} = -\lambda_j \overline{\bm v_j}$. Hence
\[ \overline{\bm v_j}^T A = (A^T \overline{\bm v_j})^T = (-A \overline{\bm v_j} )^T = (-(- \lambda_j \overline{\bm v_j}))^T=\lambda_j \overline{\bm v_j}^T.\]
Also, \[ \overline{\bm v_j}^T (E_{rs} - E_{sr}) \bm v_j =  \overline{\bm v_{j_r}}v_{j_s}-\overline{\bm v_{j_s}}v_{j_r}= 2 \i \im( \overline{\bm v_{j_r}} \bm v_{j_s} ). \] Thus
\[ 2 \i \im( \overline{\bm v_{j_r}} \bm v_{j_s} ) + \lambda_j \overline{\bm v_j}^T  \dot{\bm v_j}(0) = \dot{\lambda_j}(0) \overline{\bm v_j}^T \bm v_j + \lambda_j \overline{\bm v_j}^T \dot{\bm v_j}(0). \]
The second term in left hand side is equal to the second term in right hand side, and $\bm v_j$'s are unit vectors, that is, $\overline{\bm v_j}^T \bm v_j = 1$. Hence \[  2 \i \im( \overline{\bm v_{j_r}} \bm v_{j_s} ) = \dot{\lambda_j}(0). \]
\end{proof}

\begin{corollary}
For $M$, $A$, and $\lambda_j$'s defined as above, let $r = 2l-1$, $s=2l$, and $x_l$ be the entry in the $(r,s)$ position of $M$. Then we have
\[ \frac{\partial}{\partial x_l}\big(-\i \, \lambda_j(M)\big) \at_A = \begin{cases}
1 \text{, if $j=l$,}\\
0 \text{, otherwise.}
\end{cases} \]
\end{corollary}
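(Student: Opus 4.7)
The plan is to apply Lemma~\ref{derivativeofeff} after identifying the $x_l$-partial as one of its rank-$2$ perturbations. By construction, $x_l$ occupies the $(2l-1,2l)$ entry of $M(\bm x,\bm y)$ and $-x_l$ the $(2l,2l-1)$ entry, so evaluating $\partial/\partial x_l$ at the point $(\bm a,\bm b)$ is exactly $\frac{d}{dt}$ at $t=0$ of the perturbation $A+tE_{rs}-tE_{sr}$ with $r=2l-1$ and $s=2l$. The lemma then gives
\[
\frac{\partial\bigl(-\i\,\lambda_j(M)\bigr)}{\partial x_l}\atA
= -\i\cdot 2\,\i\,\im\bigl(\overline{\bm v_{j_r}}\,\bm v_{j_s}\bigr)
= 2\,\im\bigl(\overline{\bm v_{j_r}}\,\bm v_{j_s}\bigr),
\]
so the corollary reduces to reading off two entries of the unit eigenvector $\bm v_j$.

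Next I would use the direct-sum form in~(\ref{adirectsummatching}) to compute these eigenvectors explicitly. Each nonzero eigenvalue $\lambda_j$ of $A$ comes from exactly one $2\times 2$ block $B_j=\bigl(\begin{smallmatrix}0&-\i\lambda_j\\ \i\lambda_j&0\end{smallmatrix}\bigr)$, and a direct check gives $B_j(1,\i)^T=\lambda_j(1,\i)^T$, so $(1,\i)^T$ is indeed the eigenvector associated with $+\lambda_j$ rather than $-\lambda_j$. After normalization, $\bm v_j$ is supported only on rows $2j-1$ and $2j$, with
\[
\bm v_{j_{2j-1}}=\tfrac{1}{\sqrt{2}},\qquad \bm v_{j_{2j}}=\tfrac{\i}{\sqrt{2}},\qquad \bm v_{j_t}=0 \text{ for } t\notin\{2j-1,2j\}.
\]

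Substituting into the formula above finishes the argument. If $j=l$, both $\bm v_{j_{2l-1}}$ and $\bm v_{j_{2l}}$ are nonzero and
$\overline{\bm v_{j_{2l-1}}}\,\bm v_{j_{2l}}=\tfrac{1}{\sqrt{2}}\cdot\tfrac{\i}{\sqrt{2}}=\tfrac{\i}{2}$,
so $2\,\im(\i/2)=1$. If $j\neq l$, the support of $\bm v_j$ is disjoint from $\{2l-1,2l\}$, so one of the factors in the product is zero and the derivative vanishes. The only (mild) subtlety worth flagging is pinning down the sign convention — verifying that $(1,\i)^T$ pairs with $+\lambda_j$ under the ordering that declares $a\ge b$ on the imaginary axis iff $-a\,\i\ge -b\,\i$ — but this is immediate from the direct multiplication above, so no real obstacle remains.
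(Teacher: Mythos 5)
Your proposal is correct and follows essentially the same route as the paper: identify the $x_l$-partial with the rank-two perturbation $A+tE_{rs}-tE_{sr}$, apply Lemma~\ref{derivativeofeff}, and read off the entries of the explicit unit eigenvectors of the block-diagonal matrix $A$. The only cosmetic difference is your choice of eigenvector phase, $\tfrac{1}{\sqrt2}(1,\i)^T$ versus the paper's $\tfrac{1}{\sqrt2}(\i,-1)^T$; since $\im(\overline{\bm v_{j_r}}\,\bm v_{j_s})$ is invariant under multiplying $\bm v_j$ by a unit scalar, both give the same value.
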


\begin{proof}
Note that for $A$ we have \[\bm v_j = \frac{1}{\sqrt{2}} \left[\begin{array}{rrrrrrrr} 0 & \cdots & 0 & \i & -1 & 0 & \cdots & 0\end{array}\right]^T,\] where the nonzero entries are at $2j-1$ and $2j$ positions. Also note that \[  \frac{\partial}{\partial x_l}\big( \, \lambda_j(M)\big) \at_A = \frac{\d \lambda_j(t)}{\d t} \at_{\, t=0}. \]
Then by Lemma \ref{derivativeofeff} 
\begin{align*}
 \frac{\partial}{\partial x_l}\big(-\i \, \lambda_j(M)\big) \at_A &= (-\i) 2 \i \im \big( \, \overline{\bm v_{j_{2l-1}}} \bm v_{j_{2l}}\big) \\ 
 &= \begin{cases}
2 \im ( \, \frac{-\i}{\sqrt{2}} \frac{-1}{\sqrt{2}} ) \text{, if $j=l$,}\\
0 \text{, otherwise.}
\end{cases} \\
&= \begin{cases}
1 \text{, if $j=l$,}\\
0 \text{, otherwise.}
\end{cases} 
\end{align*}
This completes the proof.
\end{proof}

\begin{corollary}\label{JacobianAtA}
For the matrix $A$ and function $f$ defined as above we have
\[\jac(f)\at_A = I_k,\] where $I_k$ denotes the $k\times k$ identity matrix. Hence, $\jac(f)\at_A$ is nonsingular.
\end{corollary}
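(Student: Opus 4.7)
The plan is to assemble the Jacobian matrix entry by entry using exactly what the preceding corollary already provides. Recall that $f:\mathbb{R}^{k+m}\to\mathbb{R}^k$ has $j$-th component $f_j(\bm x,\bm y) = -\i\,\lambda_j(M(\bm x,\bm y))$. The $k\times k$ block of $\jac(f)$ formed by the columns corresponding to the variables $x_1,\ldots,x_k$ has its $(j,l)$-entry equal to $\partial (-\i\,\lambda_j(M))/\partial x_l$ (with $\bm y$ held fixed at $\bm b$). Since the corollary is stated as $\jac(f)|_A = I_k$, this is the block to be inspected.

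The first step is to observe that at the point $(\bm a,\bm b)$ the matrix $M$ evaluates to the block-diagonal matrix $A$ of \eqref{adirectsummatching}, which has distinct nonzero eigenvalues $\pm\i\lambda_1,\ldots,\pm\i\lambda_k$ together with $n-2k$ zeros. In particular $A$ satisfies the hypothesis of Lemma \ref{derivativeofeff}, so the eigenvalues $\lambda_j(M)$ are continuously differentiable functions of the entries of $M$ in a neighborhood of $A$, justifying the formal computation of partial derivatives at $A$.

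Next, I would directly quote the preceding corollary, which asserts
\[
\frac{\partial}{\partial x_l}\bigl(-\i\,\lambda_j(M)\bigr)\at_A
=\begin{cases} 1 & \text{if } j=l,\\ 0 & \text{otherwise.}\end{cases}
\]
This says exactly that the $(j,l)$-entry of the $\bm x$-block of $\jac(f)|_A$ is the Kronecker delta $\delta_{jl}$. Hence the $\bm x$-block of $\jac(f)|_A$ equals $I_k$, which is the claim. Since $\det(I_k)=1\neq 0$, this submatrix of the full Jacobian is invertible, so $\jac(f)|_A$ is nonsingular in the sense required for the Jacobian/implicit function argument in the next section.

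There is essentially no obstacle: the real work was done in Lemma \ref{derivativeofeff} and the preceding corollary, which together compute precisely the entries needed. The only thing to be careful about is the remark that, since $f$ formally has $k+m$ input variables but only $k$ outputs, one should understand $\jac(f)|_A$ here as the $k\times k$ matrix of partials with respect to $\bm x$; the result then guarantees that for any small perturbation of $\bm b$ the equation $f(\bm x,\bm y)=\bm a$ can be solved for $\bm x$ by the inverse function theorem, which is what the subsequent spectral realization argument will need.
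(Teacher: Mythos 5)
Your proof is correct and matches the paper's (implicit) argument exactly: the corollary follows immediately by reading off the entries $\partial(-\i\lambda_j)/\partial x_l\at_A=\delta_{jl}$ from the preceding corollary, with $\jac(f)\at_A$ understood as the $k\times k$ block of partials with respect to $\bm x$, precisely the block $\jac_x$ that Theorem \ref{IFT} requires. Your added remark clarifying this interpretation is a welcome precision (though the tool invoked later is the Implicit, not Inverse, Function Theorem).
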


Now we are ready to prove the main result of this section which characterizes the graphs with matching number $k$. We will use the Implicit Function Theorem, mentioned below. For a full treatment of the topic see \cite{kran02}.

\begin{theorem}[\textbf{Implicit Function Theorem}]\label{IFT}
Let $F: \mathbb{R}^{s+r} \rightarrow \mathbb{R}^s$ be a continuously differentiable
 function on an open subset $U$ of  $\mathbb{R}^{s+r}$ defined by 
 \[F(\bm x,\bm y)=(F_1(\bm x,\bm y), F_2(\bm x,\bm y), \ldots, F_s(\bm x,\bm y)),\]
 where $\bm x=(x_1, \ldots, x_s) \in \mathbb{R}^s$, $\bm y = (y_1, \ldots, y_r) \in \mathbb{R}^r$, and $F_i$'s are real valued multivariate functions. Let $(\bm a,\bm b)$ be an element of $U$
 with $\bm a\in \mathbb{R}^s$ and $\bm b\in \mathbb{R}^r$,  and $\bm c$ be an element of $\mathbb{R}^s$ such that 
  $F(\bm a,\bm b)=\bm c$. If \[ \jac_x(F)\at_{(\bm a,\bm b)} = \left[\frac{\partial F_i}{\partial x_{j}}\at_{(\bm a,\bm b)} \right]_{s\times s} \]
is nonsingular, then there exist an open neighborhood $V$  of $\bm a$ and an open neighborhood $W$ of $\bm b$ such that  $V \times W \subseteq U$ such that for each $\bm y\in W$ there is an $\bm x\in V$ with 
$F(\bm x,\bm y)=\bm c$. Furthermore, for any $(\bar{\bm  a}, \bar{\bm  b}) \in V\times W$ such that $F(\bar{\bm  a}, \bar{\bm  b})=\bm c$, $\jac(F)\at_{(\bar{\bm a},\bar{\bm b})}$ is also nonsingular.
\end{theorem}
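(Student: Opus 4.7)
The plan is to prove the Implicit Function Theorem by the classical reduction to the Inverse Function Theorem. First, I would introduce the auxiliary map $G : U \to \mathbb{R}^{s+r}$ defined by $G(\bm x, \bm y) = (F(\bm x, \bm y),\, \bm y)$, which satisfies $G(\bm a, \bm b) = (\bm c, \bm b)$. Writing the Jacobian in block form,
\[ \jac(G)\at_{(\bm a, \bm b)} = \begin{bmatrix} \jac_x(F)\at_{(\bm a, \bm b)} & \jac_y(F)\at_{(\bm a, \bm b)} \\ 0 & I_r \end{bmatrix}, \]
which is block upper triangular with diagonal blocks $\jac_x(F)\at_{(\bm a, \bm b)}$ (nonsingular by hypothesis) and $I_r$, so the Jacobian of $G$ at $(\bm a, \bm b)$ has nonzero determinant and hence is invertible.

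The second step is to apply the Inverse Function Theorem to $G$: there exist an open neighborhood $\widetilde{U} \subseteq U$ of $(\bm a, \bm b)$ and an open neighborhood $N$ of $(\bm c, \bm b)$ such that $G$ restricts to a $C^1$ diffeomorphism $\widetilde{U} \to N$. Because the second coordinate of $G$ is simply $\bm y$, the inverse is forced to have the form $G^{-1}(\bm u, \bm y) = (\phi(\bm u, \bm y),\, \bm y)$ for some $C^1$ function $\phi$. Choosing $V \subseteq \mathbb{R}^s$ and $W \subseteq \mathbb{R}^r$ small enough that $V \times W \subseteq \widetilde{U}$ and $\{\bm c\} \times W \subseteq N$, the assignment $\bm x := \phi(\bm c, \bm y)$ produces, for each $\bm y \in W$, an element $\bm x \in V$ with $F(\bm x, \bm y) = \bm c$, as required.

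For the nonsingularity addendum, continuity of the partial derivatives (which follows from $F$ being $C^1$) implies that $\det \jac_x(F)$ is a continuous real-valued function on $U$; since it is nonzero at $(\bm a, \bm b)$, it remains nonzero throughout an entire open neighborhood, which I may take to be $V \times W$ after further shrinking if needed. In particular it is nonzero at every $(\bar{\bm a}, \bar{\bm b}) \in V \times W$, whether or not $F(\bar{\bm a}, \bar{\bm b}) = \bm c$. The main technical obstacle is hidden inside the Inverse Function Theorem, whose standard proof rests on the Banach fixed-point theorem: one rewrites $G(\bm z) = (\bm u, \bm y)$ as a fixed-point equation for the Newton-type map $T(\bm z) = \bm z - \bigl(\jac(G)\at_{(\bm a, \bm b)}\bigr)^{-1}\bigl(G(\bm z) - (\bm u, \bm y)\bigr)$ and verifies it is a contraction on a sufficiently small closed ball using uniform continuity of $\jac(G)$. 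That contraction estimate is the one genuinely nontrivial ingredient; everything preceding and following it is essentially bookkeeping.
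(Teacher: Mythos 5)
The paper does not prove this statement: it is quoted as a standard result, with the reader referred to Krantz and Parks \cite{kran02} for a full treatment, so there is no in-paper argument to compare yours against. Judged on its own, your proof is the standard and correct reduction to the Inverse Function Theorem: the auxiliary map $G(\bm x,\bm y)=(F(\bm x,\bm y),\bm y)$ has block upper triangular Jacobian with diagonal blocks $\jac_x(F)\at_{(\bm a,\bm b)}$ and $I_r$, hence is locally invertible, and the inverse necessarily preserves the second coordinate, which yields the solution $\bm x=\phi(\bm c,\bm y)$. Two minor points of bookkeeping deserve to be made explicit. First, after obtaining $\widetilde U$ and $N$ you must shrink $W$ so that both $\{\bm c\}\times W\subseteq N$ \emph{and} $\phi(\bm c,W)\subseteq V$; the latter follows from continuity of $\phi$ at $(\bm c,\bm b)$ together with $\phi(\bm c,\bm b)=\bm a$, but it is not automatic from $V\times W\subseteq\widetilde U$ alone. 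Second, the final clause of the statement writes $\jac(F)$, which for a map $\mathbb{R}^{s+r}\to\mathbb{R}^s$ is an $s\times(s+r)$ matrix; you correctly read it as $\jac_x(F)$, and your continuity-of-the-determinant argument in fact proves the stronger assertion that $\jac_x(F)$ is nonsingular at \emph{every} point of a (possibly further shrunken) $V\times W$, not merely at points where $F=\bm c$. The genuinely hard analytic content is, as you say, buried in the Inverse Function Theorem, which you invoke as a black box with only a sketch of its contraction-mapping proof; that is acceptable at the level of rigor the paper itself adopts, since the paper treats the entire theorem as a citation.
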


\begin{theorem}\label{mathchk}
Let $G$ be a graph on $n$ vertices, and $\lambda_1 > \lambda_2 > \ldots > \lambda_k>0$ be $k$ distinct nonzero purely imaginary numbers where $2k \leq n$. Then $\match(G) = k$ if and only if \begin{enumerate}[(a)]
	\item there is a matrix $A \in S^-(G)$ whose eigenvalues are $\seq{\pm\lambda}{k}{,}$ and $n-2k$ zeros, and
	\item for all matrices $A \in S^-(G)$, $A$ has at most $2k$ nonzero eigenvalues.
\end{enumerate}
\end{theorem}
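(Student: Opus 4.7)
The plan is to leverage the Implicit Function Theorem machinery already built up in this section. The backward direction is immediate: hypothesis (a) supplies an $A \in S^{-}(G)$ with exactly $2k$ nonzero eigenvalues, so $\rank(A) = 2k$ by Lemma \ref{nonzeroeigen} and hence $\MR(G) \geq 2k$; hypothesis (b) combined with Lemma \ref{nonzeroeigen} forces $\rank(A) \leq 2k$ for every $A \in S^{-}(G)$, so $\MR(G) \leq 2k$; then Theorem \ref{maxskewrank} yields $\match(G) = k$. Conversely, if $\match(G) = k$ then $\MR(G) = 2k$ by Theorem \ref{maxskewrank}, and Lemma \ref{nonzeroeigen} translates this into condition (b) at once. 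Hence the whole weight of the proof rests on constructing a matrix in $S^{-}(G)$ realizing the prescribed spectrum, that is, establishing condition (a).

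For the construction I would use the parametrized matrix $M(\bm x, \bm y)$ set up in the preamble, with $x_j$ labeling the edges of a fixed maximum matching $\mathcal{M}$ and $y_l$ labeling the remaining $m$ edges of $G$. At the reference point $(\bm a, \bm b) = (-\i\lambda_1,\ldots,-\i\lambda_k,0,\ldots,0)$ the matrix $M(\bm a, \bm b) = A$ is the block-diagonal matrix (\ref{adirectsummatching}) and already has the correct spectrum $\{\pm\lambda_1,\ldots,\pm\lambda_k\}$ together with $n-2k$ zeros, but its graph is only $\mathcal{M}$, a strict subgraph of $G$ whenever $m>0$. The task is therefore to switch on all of the $y_l$'s (making them nonzero) while simultaneously correcting the $x_j$'s so that the spectrum is preserved. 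Corollary \ref{JacobianAtA} asserts that $\jac_{\bm x}(f)\at_{(\bm a,\bm b)} = I_k$, so Theorem \ref{IFT} applies with $\bm x$ as the implicit variables and $\bm y$ as the parameters, producing open neighborhoods $V$ of $\bm a$ in $\mathbb{R}^k$ and $W$ of $\bm b$ in $\mathbb{R}^m$ such that for every $\bm y \in W$ there exists $\bm x \in V$ with $f(\bm x, \bm y) = (-\i\lambda_1,\ldots,-\i\lambda_k)$. Shrinking $V$ if necessary, I would arrange that every $\bm x \in V$ has all nonzero entries (possible because $\bm a$ does), then pick any $\bm y^{*} \in W$ with every component nonzero (possible because $W$ is an open neighborhood of the origin in $\mathbb{R}^m$), let $\bm x^{*} \in V$ be the corresponding preimage, and set $A^{*} = M(\bm x^{*}, \bm y^{*})$.

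It then remains to verify that $A^{*}$ has the required properties. Every entry of $A^{*}$ corresponding to an edge of $G$ is nonzero by construction, so the graph of $A^{*}$ is exactly $G$ and $A^{*} \in S^{-}(G)$; and by the choice of $\bm x^{*}$ the top $k$ eigenvalues of $A^{*}$ are $\i\lambda_1,\ldots,\i\lambda_k$. The one subtlety, which I expect to be the main obstacle, is pinning down the remaining $n-k$ eigenvalues as $-\i\lambda_1,\ldots,-\i\lambda_k$ together with $n-2k$ zeros rather than some small nonzero imaginary numbers. This is secured by a rank-squeeze: since $A^{*} \in S^{-}(G)$ and $\MR(G) = 2k$, Lemma \ref{nonzeroeigen} forces $A^{*}$ to have at most $2k$ nonzero eigenvalues, and skew-symmetry forces its spectrum to lie on the imaginary axis and be closed under negation, so the already-identified $k$ eigenvalues in the upper half-plane determine the $k$ eigenvalues in the lower half-plane by symmetry and the remaining $n-2k$ eigenvalues must vanish. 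This completes the proof of (a) and hence the theorem.
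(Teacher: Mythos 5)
Your proposal is correct and follows essentially the same route as the paper: the backward direction and condition (b) via Lemma \ref{nonzeroeigen} and Theorem \ref{maxskewrank}, and condition (a) via the Implicit Function Theorem applied to $f$ at the block-diagonal point $(\bm a,\bm b)$ using Corollary \ref{JacobianAtA}. You are in fact more explicit than the paper on two details it glosses over --- keeping the $\bm x$-coordinates nonzero so that the graph of $A^{*}$ is exactly $G$, and the rank-squeeze pinning down the remaining $n-k$ eigenvalues --- while omitting only the degenerate case $m=0$ (no edges outside the maximum matching), which the paper dispatches by noting that the matrix in (\ref{adirectsummatching}) already works.
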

\begin{proof}
Assume that (a) and (b) hold. Then (a) and Lemma \ref{nonzeroeigen} imply that $\MR(G)\geq 2k$. Furthermore (b) and Lemma \ref{nonzeroeigen} imply that $\MR(G) \leq \rank{A} = 2k$. Thus $\MR(G)=2k$. By Theorem \ref{maxskewrank} we have $\match(G)=\frac{\MR(G)}{2} = \frac{2k}{2}=k$.
 
Now assume that $\match(G)=k$. If $G$ is a disjoint union of edges, then the matrix $A$ given by (\ref{adirectsummatching}) has the desired properties. Assume that there is an edge which is not in a maximum matching, that is, $G$ has $k+m$ edges where $m>0$. Consider the function $f$, and the matrices $M$ and $A$ as above. Note that $f\at_A = (-\i \lambda_1, -\i \lambda_2, \ldots, -\i \lambda_k )$, and $\jac(f)\at_A$ is nonsingular, by Corollary \ref{JacobianAtA}. Then by the Implicit Function Theorem (Theorem \ref{IFT}) there are open sets $U \in \mathbb{R}^k$ and $V \in \mathbb{R}^m$, such that $(-\i \lambda_1,\ldots,-\i \lambda_k) \in U$ and $(0,\ldots,0) \in V$, and for any $(\varepsilon_1, \ldots, \varepsilon_m) \in V$, there is a $(-\i \widehat{\lambda_1}, \ldots, -\i \widehat{\lambda_k}) \in U$ close to $(-\i \lambda_1,\ldots,-\i \lambda_k)$, such that \[f(-\i \widehat{\lambda_1}, \ldots, -\i \widehat{\lambda_k},\varepsilon_1, \ldots, \varepsilon_m) = (-\i \lambda_1, \ldots, -\i  \lambda_k).\] Since $V$ is an open neighborhood of $(0,\ldots,0) \in \mathbb{R}^m$, one can choose all $\varepsilon_i \neq 0$. Let $\widehat{A} = M(-\i \widehat{\lambda_1}, \ldots, -\i \widehat{\lambda_k},\varepsilon_1, \ldots, \varepsilon_m)$. Then eigenvalues of $A$ are $(-\i \lambda_1, -\i \lambda_2, \ldots, -\i \lambda_k )$ and graph of $A$ is $G$. That is $(a)$ holds. Also, by Theorem \ref{maxskewrank} and Lemma \ref{nonzeroeigen}, $(b)$ holds.
\end{proof}

Note that for a given graph $G$ with matching number $k$, there might exist skew-symmetric matrices with less than $2k$ nonzero eigenvalues whose graph is $G$. One easy example is the complete bipartite graph $K_{n,n}, n\geq 2$. The matching number of $K_{n,n}$ is $n$ and its skew-adjacency matrix $A=xy^T-yx^T$, where
$x=\left[\begin{array}{c|c}\all1 & \all1\end{array} \right]^T$ and  $y=\left[\begin{array}{c|c}\all1 & 2\cdot\all1\end{array} \right]^T$ and  $\all1$ is the all ones vector of order $n$, has only two nonzero eigenvalues $\pm n \i$.

\begin{remark}
Theorem \ref{mathchk} shows that the graphs $G$ of order $n$ whose matching number is $k$ are precisely those graphs with the maximum skew rank $2k$ such that for any given set of $k$ distinct nonzero purely imaginary numbers there is a real skew-symmetric matrix $A$ with graph $G$ whose spectrum consists of the given $k$ numbers, their conjugate pairs and $n-2k$ zeros.
\end{remark}

\begin{small}

\end{small}
\end{document}